\newtheorem{lemma}{Lemma}
\newtheorem{theorem}{Theorem}
\newtheorem{thm}{Theorem}
\newtheorem{example}{Example}
\newtheorem{remark}{Remark}
\title[Study of Growth of Certain Second Order Linear Differential Equations]{Study of Growth of Certain Second Order Linear Differential Equations}
\author[Naveen Mehra, Garima Pant and S. K. Chanyal]{Naveen Mehra, Garima Pant and S. K. Chanyal}
\address{Naveen Mehra, Department of Mathematics, Kumaun University, D.S.B. Campus, Nainital-263001, Uttarakhand, India.}
\email{naveenmehra00@gmail.com}
\address{Garima Pant, Department of Mathematics, University of Delhi, Delhi-110007, India.}
\email{garimapant.m@gmail.com}
\address{S. K. Chanyal, Department of Mathematics, Kumaun Univesity, D.S.B. Campus, Nainital-263001, Uttarakhand, India.}
\email{skchanyal.math@gmail.com}
\subjclass[2020]{34M10, 30D35}
\keywords{entire function, order of growth, homogenous linear differential equation and non-homogenous linear differential equation}
\begin{document}
	\maketitle
	\begin{abstract}
In this article, we study about the solutions of second order linear differential equations by considering several conditions on the coefficients of homogenous linear differential equation and its associated non-homogenous linear differential equation.
	\end{abstract}
	\section{Introduction}
Consider homogenous linear complex differential equation
\begin{equation}\label{2order}
		f''+A(z)f'+B(z)f=0,
	\end{equation}
	where $A(z)$ and $B(z)\not\equiv 0$ are entire functions. All the solutions of equation \eqref{2order} are of finite order if and only if the coefficient $A_(z)$ and $B(z)$ are polynomials (see \cite{wittich1966}). It is a natural question to arise that what happens when atleast one of the coefficient is transcendental entire function. M. Frei \cite{frei1961losungen} answered to this question. He proved that atmost all non-trivial solution of equation \eqref{2order} are of infinite order.\\

The main aim of this work is to find conditions on entire coefficients $A(z)$ and $B(z),$ so that all non-trivial solutions of equation \eqref{2order} are of infinite order. Many researchers studied this problem earlier. Gundersen \cite{gundersen1988finite} proved that if $\rho(A)<\rho(B)$, then all non-trivial solutions of equation \eqref{2order} are of infinite order. It is clear that if $A(z)$ is a polynomial and $B(z)$ is a transcendental entire function, then all non-trivial solutions are of infinite order. But, the case $\rho(A)\geq\rho(B)$ was unexplored until the paper by Ozawa \cite{ozawa1980solution}. After Ozawa's paper, many other researchers studied the same case partially. 
The following result is the collection of those results.
\begin{theorem}
All non-trivial solution of equation \eqref{2order} are of infinite order if the coefficients $A(z)$ and $B(z)$ satisfy any of the following conditions
\begin{enumerate}[(a)]
\item\cite{gundersen1988finite} $\rho(A)<\rho(B)$;
\item\cite{hellerstein1991growth} $\rho(B) < \rho(A) \leq \frac{1}{2}$;
\item\cite{gundersen1988finite} $A(z)$ is a transcendental entire function with $\rho(A) = 0$ and $B(z)$ is a polynomial;
\item\cite{gundersen1988finite} $A(z)$ is a polynomial and $B(z)$ is a transcendental entire function.
\end{enumerate}
\end{theorem}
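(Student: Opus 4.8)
All four parts admit a single line of attack, so the plan is to assume for contradiction that \eqref{2order} has a nontrivial solution $f$ with $\rho(f)=\rho<\infty$ and to derive a contradiction under each hypothesis. The tools I would use are Gundersen's estimates for logarithmic derivatives (a set $E\subset[1,\infty)$ of finite logarithmic measure and a constant $K=K(\rho)$ with $|f^{(k)}(z)/f(z)|\le r^{K}$ for $k=1,2$ whenever $|z|=r\notin E$) together with a minimum-modulus, i.e.\ ``$\cos\pi\rho$'', theorem for entire functions of order below $\tfrac12$. Dividing \eqref{2order} by $f$ gives the identity $B=-f''/f-A\,f'/f$, and with $g=f'/f$ the Riccati equation $g'+g^{2}+Ag+B=0$; everything is then read off by comparing the growth of $A$ and $B$ against the polynomial bound $r^{K}$ along a suitable sequence $r_n\to\infty$ lying outside $E$.

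\emph{Cases (a) and (d): the coefficient $B$ dominates.} Choose $z_n$ with $|z_n|=r_n\notin E$ and $|B(z_n)|=M(r_n,B)$, taking $r_n$ along radii that realize the order of $B$. The identity yields $M(r_n,B)\le r_n^{K}\bigl(1+M(r_n,A)\bigr)$; in case (a) the right side is at most $r_n^{K}\exp\bigl(r_n^{\rho(A)+\varepsilon}\bigr)$, so after taking logarithms and using $M(r_n,B)\ge\exp\bigl(r_n^{\rho(B)-\varepsilon}\bigr)$ one contradicts $\rho(A)+\varepsilon<\rho(B)-\varepsilon$ for small $\varepsilon$. Case (d) is the same computation with $M(r,A)\le r^{\deg A}$ and with $M(r_n,B)$ eventually exceeding every fixed power of $r_n$, since a transcendental $B$ has $\log M(r,B)/\log r\to\infty$; this also handles (a) when $\rho(B)=0$.

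\emph{Cases (b) and (c): the coefficient $A$ dominates.} Now the previous comparison runs the wrong way, so I would instead bound $|A|$ from below on whole circles, which is exactly where $\rho(A)\le\tfrac12$ (resp.\ $\rho(A)=0$) enters: since $\cos\pi\rho(A)>0$, the $\cos\pi\rho$ theorem supplies a sequence $r_n\to\infty$, which may be chosen outside $E$, with $\log\min_{|z|=r_n}|A(z)|\ge(\cos\pi\rho(A)-\varepsilon)\log M(r_n,A)$, and since $A$ is transcendental one may moreover force $\log M(r_n,A)$ to be as large as needed. The Riccati equation then gives, on $|z|=r_n$,
\[
|A(z)|\,|g(z)|\le |g'(z)|+|g(z)|^{2}+|B(z)|\le r_n^{K}+M(r_n,B),
\]
hence $|g(z)|\le\bigl(r_n^{K}+M(r_n,B)\bigr)/\min_{|z|=r_n}|A(z)|$; as $\rho(B)<\rho(A)$ (resp.\ $B$ is a polynomial while $A$ is transcendental) the denominator outgrows the numerator by an exponential factor, so $\delta_n:=\max_{|z|=r_n}|f'/f|$ satisfies $r_n\delta_n\to0$. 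By the argument principle $n(r_n,1/f)=0$ for all large $n$, so $f$ has no zeros; writing $f=e^{h}$ with $h$ entire makes $h'=f'/f$ entire with $\max_{|z|=r_n}|h'|\to0$, whence the maximum principle forces $h'\equiv0$, $f$ is a nonzero constant, and \eqref{2order} collapses to $Bf\equiv0$, contradicting $B\not\equiv0$.

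\emph{Main obstacle.} Parts (a) and (d) are routine once the logarithmic-derivative estimate is available; the substance is in (b) and (c), and there the delicate step is producing the circles on which $|A|$ is uniformly large, i.e.\ a lower bound for $\min_{|z|=r}|A(z)|$. For $\rho(A)<\tfrac12$ this is a clean application of the $\cos\pi\rho$ theorem, but at the endpoint $\rho(A)=\tfrac12$, where $\cos\pi\rho(A)=0$ and that estimate is vacuous, one needs the sharper minimum-modulus analysis of Hellerstein--Miles--Rossi; I expect this endpoint to be the principal difficulty. A secondary technical point is coordinating the three ingredients — the $\cos\pi\rho$ estimate, the logarithmic-derivative estimate, and a lower bound for $\log M(r,A)$ — along one common sequence $r_n$, which is possible because the first two exceptional sets have finite logarithmic measure while the radii witnessing the order of $A$ form an unbounded set.
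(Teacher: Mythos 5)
The paper never proves this statement: it is Theorem A, a survey of results quoted with citations (parts (a), (c), (d) to Gundersen, part (b) to Hellerstein--Miles--Rossi), so your attempt can only be measured against those original arguments. Your cases (a) and (d) are essentially the standard argument and are fine, modulo the routine repair that the radii with $M(r_n,B)\ge\exp\bigl(r_n^{\rho(B)-\varepsilon}\bigr)$ need not avoid the exceptional set $E$; since an unbounded sequence can sit entirely inside a set of finite logarithmic measure, you must slide to a nearby radius using monotonicity of $M(r,B)$ (or argue with proximity functions, as in the classical proof via $m(r,B)\le m(r,A)+m(r,f''/f)+m(r,f'/f)+O(1)$). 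Case (c) also goes through: for $\rho(A)=0$ the Barry-type minimum modulus theorem gives $\log\min_{|z|=r}|A(z)|\ge(1-\varepsilon)\log M(r,A)$ on a set of logarithmic density $1$, transcendence gives $\log M(r,A)/\log r\to\infty$ for \emph{all} $r$, and your zero-free/argument-principle/maximum-principle endgame is sound.

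The genuine gap is case (b), and it is larger than the endpoint you flag. Even for $\rho(A)<\tfrac12$, your argument needs circles $|z|=r_n$ on which \emph{simultaneously} $\min_{|z|=r_n}|A(z)|\ge M(r_n,A)^{c}$ with $c>0$ and $\log M(r_n,A)\ge r_n^{\beta}$ for some $\beta>\rho(B)$, so that the denominator beats $M(r_n,B)$. The $\cos\pi\rho$ theorem and its density versions (Besicovitch, Barry) supply circles of the first kind, but on those circles one can only guarantee $\log M(r_n,A)\ge r_n^{\mu(A)-\varepsilon}$, where $\mu(A)$ is the \emph{lower} order; the hypothesis $\rho(B)<\rho(A)\le\tfrac12$ allows $\mu(A)\le\rho(B)$, in which case the (possibly sparse) radii where $\log M(r,A)$ is of size $r^{\rho(A)-\varepsilon}$ may be disjoint from the minimum-modulus set, and the claimed exponential domination fails. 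Thus your sketch proves only the weaker statement with $\rho(B)<\mu(A)$ and $\mu(A)<\tfrac12$; the full statement $\rho(B)<\rho(A)\le\tfrac12$, including $\rho(A)=\tfrac12$ where $\cos\pi\rho(A)=0$, is exactly the substantive content of Hellerstein--Miles--Rossi, whose proof rests on Miles' Fourier-series method rather than a direct $\cos\pi\rho$ estimate. Your closing claim that the sequences can be coordinated ``because the radii witnessing the order of $A$ form an unbounded set'' is likewise insufficient for the minimum-modulus circles, since $\min_{|z|=r}|A(z)|$ is not monotone in $r$ and one cannot simply move to a nearby non-exceptional radius there.
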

\begin{example}
\begin{enumerate}[(i)]
\item $f''+e^zf'+(e^z-1)f=0$ has solution $f(z)=e^z.$
\item $f''+(\sin^2z-2\tan z)f'-\tan zf=0$ has solution $f(z)=\tan z$.
\end{enumerate}
\end{example}
It can be observed from the above examples that differential equations have finite order solutions  when $\rho(A)=\rho(B)$ or $\rho(A)>\rho(B)$ and $\rho(A)>1/2.$ There arises a question that under what circumstances equation \eqref{2order} possesses all non-trivial solutions of infinite order with these conditions. In the next section we partially answer this question.\\ 

The corresponding non-homogenous second order linear differential equation is
\begin{equation}\label{2ordernonhomo}
	f''+A(z)f'+B(z)f=H(z),
\end{equation}
where $A(z)$, $B(z)$ and $H(z)$ are entire functions.
A non-homogenous linear differential equation can always be reduced back to homogenous linear differential equation. So, the basic results are similar to as in homogenous case. If all the coefficients and $H(z)$ are entire functions, then all the solutions of equation \eqref{2ordernonhomo} are also entire functions (see \cite{shi1991two}). If all the coefficients are polynomials and $H(z)\neq 0$ have finite order of growth, then all solutions of equation \eqref{2ordernonhomo} are of finite order of growth (see \cite[Lemma 2]{gao1989complex}). Therefore, if atleast one of the coefficient is a transcendental entire function then atmost all solutions are of infinite order. Let $\rho$ be the minimal order of solutions of equation \eqref{2order}, then it is completely elementary that there may exist atmost one solution of order less than $\rho$ of equation \eqref{2ordernonhomo} (see \cite{laine1990note}). Thus, if all non-trivial solutions of equation \eqref{2order} are of infinite order, there may exist finite order solution of equation \eqref{2ordernonhomo}.   We illustrate this fact by the following examples.
\begin{example}\label{nordernonhomoeg1}
	The equation $$f''+z f' + e^z f=e^{-z}(1-z) +1$$ has a finite order solution, that is, $f(z)=e^{-z},$ whereas the associated \linebreak homogenous equation has all non-trivial solutions of infinite order.
\end{example}

\begin{example}\label{nordernonhomoeg2}
	Let $b(z)$ be a finite order entire function and has multiply connected Fatou component. Then, the equation $$f''-e^zf'+b(z)f=0,$$ has all non-trivial solutions of infinite order (see \cite[Theorem B]{mehra2022growth}). But the \linebreak associated non-homogenous equation $$f''-e^zf'+b(z)f=e^{-z}(1+b(z))+1$$ has finite order solution $f(z)=e^{-z}.$
\end{example}
	\section{Results}
	\subsection{Second Order Homogenous Linear Differential Eqaution}

G. Zhang \cite{zhang2018infinite} in his paper investigated the solutions of equation \eqref{2order}, when $A(z)=e^{P(z)}$ such that $P(z)$ and $B(z)$ are poynomials of degree $m$ and $n$ respectively.

\begin{theorem}\cite{zhang2018infinite}\label{zhang}
	Suppose $A(z) = e^{p(z)}$, where $p(z)$ is a polynomial with degree $n\geq 2$ and $B(z) = Q(z)$ is also a nonconstant polynomial with degree $m$. If $m + 2 > 2n$ and $n\nmid m + 2$. Then, every solution $f (\not\equiv 0)$ of equation 
	$$f''+ e^{p(z)}f'+ Q(z)f = 0 $$
	is of infinite order.\end{theorem}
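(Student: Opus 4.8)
The plan is a proof by contradiction. Suppose $f\not\equiv 0$ solves $f''+e^{p}f'+Qf=0$ with $\rho(f)=\sigma<\infty$, and write $p(z)=a_nz^{n}+\cdots$. The rays on which $\operatorname{Re}(a_nz^{n})=0$ split $\mathbb{C}$ into $2n$ open sectors of opening $\pi/n$, alternately ``positive'' sectors $S_k^{+}$, on which $\operatorname{Re}p(z)\ge\delta_k|z|^{n}$, and ``negative'' sectors $S_k^{-}$, on which $\operatorname{Re}p(z)\le-\delta_k|z|^{n}$, with $\delta_k>0$ on each closed subsector; thus $|A|\ge e^{\delta_k|z|^{n}}$ on $S_k^{+}$ and $|A|\le e^{-\delta_k|z|^{n}}$ on $S_k^{-}$. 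Dividing the equation by $f$ gives $e^{p}(f'/f)=-Q-f''/f$. By Gundersen's estimates for logarithmic derivatives, off an exceptional set of directions of linear measure zero (and for $|z|$ off a set of finite logarithmic measure) one has $|f''/f|\le|z|^{K}$ and $|Q(z)|\le|z|^{K}$ for a suitable $K=K(\sigma)$, so on such rays inside $S_k^{+}$,
\[
\left|\frac{f'(z)}{f(z)}\right|\le 2\,|z|^{K}\,e^{-\delta_k|z|^{n}}\longrightarrow 0
\]
very fast; integrating $\tfrac{d}{dr}\log|f(re^{i\theta})|=\operatorname{Re}(e^{i\theta}f'/f)$ then shows $|f(re^{i\theta})|$ stays bounded as $r\to\infty$. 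Hence the Phragm\'en--Lindel\"of indicator satisfies $h_f(\theta)\le0$ on a dense set of directions in each $S_k^{+}$, and by continuity on the whole closed positive sector, an arc of length $\pi/n$. A parallel Nevanlinna computation (put $g=f'/f$, so $g'+g^{2}+e^{p}g+Q=0$, whence $e^{p}=-(g'+g^{2}+Q)/g$; comparing characteristic functions with $T(r,e^{p})\ge c\,r^{n}$) gives $\sigma\ge n$.

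Next, and this is the heart of the argument, I would study $f$ on the negative sectors, where the term $e^{p}f'$ is exponentially small relative to any power of $|z|$. Exploiting this, a WKB / variation‑of‑parameters analysis of the reduced equation $w''+Q(z)w=0$ (whose solutions all have order exactly $(m+2)/2$) shows that $f$ restricted to $S_k^{-}$ is asymptotically a non‑trivial solution of $w''+Qw=0$; and no non‑trivial solution of $w''+Qw=0$ can have indicator $\le0$ throughout a sector of opening exceeding $\pi/\big((m+2)/2\big)=2\pi/(m+2)$ (otherwise $(m+2)/2$‑trigonometric convexity would force its indicator $\le0$ everywhere). Since the hypothesis $m+2>2n$ is precisely $\pi/n>2\pi/(m+2)$, each $S_k^{-}$ is wide enough to force $f$ to grow like $\exp(c|z|^{(m+2)/2})$ somewhere inside it; together with the flatness on the positive sectors this forces $\sigma=\rho(f)=(m+2)/2$ and $h_f(\theta)>0$ for some $\theta$ interior to $S_k^{-}$.

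Now the contradiction follows from the rigidity of the indicator. For an entire function of order $\sigma$, $h_f$ is $\sigma$‑trigonometrically convex, so if $h_f\le0$ on a closed arc of length $\ge\pi/\sigma$ then $h_f\le0$ on all of $[0,2\pi]$. Here $h_f\le0$ on each positive sector, of length $\pi/n$, and $\pi/n>2\pi/(m+2)=\pi/\sigma$ again by $m+2>2n$; so $h_f\le0$ everywhere, contradicting the growth just found on $S_k^{-}$. The only way to evade this is a resonance between the two angular symmetries: the $2n$ critical rays of $e^{p}$ (equally spaced by $\pi/n$) would have to sit among the critical rays of the reduced equation (there are $m+2$ of them, equally spaced by $2\pi/(m+2)$), so that the asymptotic solution on $S_k^{-}$ can be genuinely recessive right up to the bounding rays and the connection (Stokes) data close up consistently around $0$. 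Such a containment of one equally‑spaced family in another requires $2n\mid m+2$, which $n\nmid m+2$ forbids; this is exactly where the arithmetic hypothesis is used.

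The step I expect to be the main obstacle is the negative‑sector analysis — making rigorous that $f$ is asymptotic to a solution of $w''+Qw=0$ there, and, more delicately, ruling out the a priori possibility $\sigma>n$: the error term in variation of parameters involves $e^{p}f'$, which is manifestly small only once the growth of $f$ is controlled, so one must bootstrap (start from $\sigma\ge n$, sharpen, iterate) and keep careful track of the connection coefficients along $S_k^{-}$, which is precisely where the condition $n\nmid m+2$ becomes indispensable. By comparison, the logarithmic‑derivative estimate in the positive sectors and the Phragm\'en--Lindel\"of / trigonometric‑convexity steps are routine.
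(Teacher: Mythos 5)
Your set-up (splitting the plane by the sign of $\operatorname{Re}p$, showing $f'/f$ is exponentially small on rays where $|e^{p}|$ is large so that $f$ tends to a finite limit there, and treating the negative sectors as a perturbation of $w''+Qw=0$) matches the skeleton of the paper's argument (the paper proves the stronger Theorem~\ref{B(z)polynomial}, whose proof specializes to this statement, via the substitution $f=y\exp\{-\tfrac12\int A\}$, Lemma~\ref{Q(z)} and Lemma~\ref{y''+ A(z)y = 0}). But the decisive step of your proposal --- the step in which the hypotheses $m+2>2n$ and $n\nmid m+2$ are supposed to produce the contradiction --- does not work. The indicator rigidity you invoke is false as stated: for $f(z)=e^{-z}$ one has $h_f\le 0$ on a closed arc of length $\pi/\sigma$ while $h_f>0$ elsewhere, and the correct consequence of $\sigma$-trigonometric convexity is only that each connected component of $\{h_f>0\}$ has angular length at least $\pi/\sigma=2\pi/(m+2)$. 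Since the negative sectors have width $\pi/n$ and the hypothesis $m+2>2n$ says precisely that $\pi/n>2\pi/(m+2)$, a positivity component fits comfortably inside each negative sector: convexity of the indicator together with ``$h_f\le0$ on the positive sectors'' yields no contradiction at all. In fact your argument up to that point never uses $n\nmid m+2$, which is a sure sign something is missing, and the subsequent ``resonance/Stokes data'' paragraph is not an argument: no statement is proved there, and even the divisibility you assert as the obstruction ($2n\mid m+2$) is not what a careful version of the geometry gives.

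Where the arithmetic hypothesis actually enters in the paper is concrete and different: after establishing $f\to 1$ in the sectors where $\delta(p,\theta)>0$ and the bound $\log^{+}|f|=O\bigl(r^{(m+2)/2}\bigr)$ (hence $\rho(f)\le\frac{m+2}{2}$, via the Herold-type Lemma~\ref{y''+ A(z)y = 0} and Phragm\'en--Lindel\"of), Langley's Lemma~\ref{Q(z)} applied to the transformed equation produces, for every critical ray of $Q$ lying inside a negative sector, a path tending to infinity along which $f\to 0$. If a boundary ray $\theta=(2i+1)\pi/n$ of a negative sector were not a critical ray of $Q$, then (because the critical rays of $Q$ are spaced $2\pi/(m+2)<\pi/n$ apart) such an $f\to 0$ path would lie within angular distance less than $2\pi/(m+2)$ of a ray on which $f\to 1$, and the narrow-domain Phragm\'en--Lindel\"of argument (Remark~\ref{rem}) combined with $\rho(f)\le\frac{m+2}{2}$ forces $0=1$. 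Hence every ray $(2i+1)\pi/n$ must be a critical ray of $Q$; since consecutive such rays differ by $2\pi/n$ and critical rays of $Q$ differ by multiples of $2\pi/(m+2)$, this gives $m+2=kn$, contradicting $n\nmid m+2$. Your proposal also leaves the negative-sector comparison with $w''+Qw=0$ as an admitted sketch; the paper makes it rigorous with the Liouville substitution plus the two quoted lemmas, so if you want to complete your write-up you should adopt that transformation and replace the indicator-convexity step by the path-versus-ray Phragm\'en--Lindel\"of argument just described.
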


In our first main result, we replace $A(z)=e^{P(z)}$ by $A(z)=h(z)e^{P(z)}$ in Theorem \ref{zhang}, where $\rho(h)<n$. 
\begin{thm}\label{B(z)polynomial}
	Suppose $A(z) = h(z)e^{p(z)}$, where $p(z)$ is a polynomial with degree $n\geq 2$ and $B(z) = Q(z)$ is also a nonconstant polynomial with degree $m$. If $m + 2 > 2n$ and $n\nmid m + 2$. Then, every solution $f (\not\equiv 0)$ of equation 
	\begin{equation}\label{Q(z)poly}
		f''+ h(z)e^{p(z)}f'+ Q(z)f = 0
	\end{equation} 
	is of infinite order.
\end{thm}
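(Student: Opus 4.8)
The plan is to argue by contradiction, following the sectorial asymptotic analysis used by Zhang in the proof of Theorem~\ref{zhang} and verifying at each step that the extra factor $h$, having order $\rho(h)<n$, is too small to disturb any leading estimate. Suppose then that $f\not\equiv 0$ solves \eqref{Q(z)poly} with $\rho(f)=\sigma<\infty$. First I would dispose of the degenerate cases: since $p$ is nonconstant, $he^{p}$ is transcendental, and if $f$ were a polynomial then $he^{p}f'=-f''-Qf$ would be a polynomial, forcing $f'\equiv 0$ and then $Qf\equiv 0$, which is impossible; so $f$ is transcendental. If $f$ had only finitely many zeros, then, as $\rho(f)<\infty$, we could write $f=\Pi e^{g}$ with $\Pi$ and $g$ polynomials, and substituting this into \eqref{Q(z)poly} and dividing by $f'$ would exhibit $he^{p}$ as a rational function, which is absurd because a zero-free rational function is constant. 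Hence $f$ is transcendental with infinitely many zeros. Setting $g=f'/f$ in \eqref{Q(z)poly} gives the Riccati identity $he^{p}=-g-g'/g-Q/g$, and combining it with the lemma on logarithmic derivatives yields $T(r,he^{p})\le C\,T(r,g)+O(\log r)$, hence $\rho(f)\ge\rho(f'/f)\ge\rho(he^{p})=n$.

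The heart of the proof is a sectorial asymptotic analysis. Let $a_{m}$ and $a_{n}$ be the leading coefficients of $Q$ and $p$. The $2n$ rays on which $\operatorname{Re}p(z)=0$ cut the plane into $n$ open sectors $S_{1}^{+},\dots,S_{n}^{+}$ of opening $\pi/n$ on which $\operatorname{Re}p(z)\to+\infty$ and $n$ sectors $S_{1}^{-},\dots,S_{n}^{-}$ of opening $\pi/n$ on which $\operatorname{Re}p(z)\to-\infty$. On rays interior to an $S_{j}^{+}$ one has $|h(z)e^{p(z)}|\ge\exp(c|z|^{n})$ for a suitable $c>0$; here $\rho(h)<n$ is exactly what guarantees $\log|h(z)|\ge -|z|^{\rho(h)+\varepsilon}\gg -|z|^{n}$ off a thin exceptional set of radii and directions, so $h$ cannot spoil this lower bound. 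Then from $he^{p}(f'/f)=-f''/f-Q$ together with Gundersen's estimate $|f^{(k)}/f|\le|z|^{K}$ we get $|f'/f|\le|z|^{K}\exp(-c|z|^{n})\to0$ rapidly, so $\log|f|$ stays bounded along such rays (off the zeros of $f$), and a Phragm\'en--Lindel\"of argument over thin sub-sectors, using $\rho(f)<\infty$, upgrades this to boundedness of $f$ throughout each $S_{j}^{+}$. On rays interior to an $S_{j}^{-}$, instead $he^{p}\to 0$ super-exponentially, so the middle term of \eqref{Q(z)poly} is a negligible perturbation, and by Hille-type asymptotic integration $f$ behaves there like a solution of $f''+Qf=0$, i.e.\ like $a_{+}z^{-m/4}e^{\Phi(z)}+a_{-}z^{-m/4}e^{-\Phi(z)}$ with $\Phi(z)=\tfrac{2}{m+2}\sqrt{-a_{m}}\,z^{(m+2)/2}$. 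Since $f$ is transcendental it is unbounded, so it genuinely grows in some $S_{j}^{-}$, where the growth is of order $(m+2)/2$; as $f$ is bounded in every $S_{j}^{+}$, this forces $\rho(f)=(m+2)/2$ and finite type (consistent with $\rho(f)\ge n$, since $(m+2)/2>n$).

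Now I would extract a contradiction from the indicator function $h_{f}$ of $f$, which is continuous and $\rho$-trigonometrically convex for $\rho=(m+2)/2$. By the preceding paragraph $h_{f}\equiv0$ on the interior of each $S_{j}^{+}$, so $h_{f}$ vanishes at every endpoint of every $S_{j}^{-}$. Fix an $S_{j}^{-}$ on which $f$ grows. Since its opening $\pi/n$ exceeds $\pi/\rho=2\pi/(m+2)$ --- and this is precisely where the hypothesis $m+2>2n$ is used --- $f$ cannot be the recessive exponential throughout $S_{j}^{-}$, and a nontrivial solution of $f''+Qf=0$ cannot have both leading coefficients vanishing over a sector this wide; hence on $S_{j}^{-}$ one has $h_{f}(\theta)\in\{\,\tfrac1\rho|a_{m}|^{1/2}|\cos(\rho\theta+\psi_{0})|,\ \pm\tfrac1\rho|a_{m}|^{1/2}\cos(\rho\theta+\psi_{0})\,\}$ according to which of $a_{\pm}$ is nonzero, $\psi_{0}$ being fixed by the chosen branch of $z^{\rho}$ on that sector. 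In every case, $h_{f}=0$ at the two endpoints of $S_{j}^{-}$ forces $\cos(\rho\theta+\psi_{0})=0$ there, so both endpoints lie on rays where $\operatorname{Re}\Phi=0$; consecutive such rays are a distance $\pi/\rho=2\pi/(m+2)$ apart, while the two endpoints are a distance $\pi/n$ apart, so $\pi/n$ is an integer multiple of $2\pi/(m+2)$, i.e.\ $m+2=2nk$ for some integer $k\ge1$. Then $n\mid m+2$, contradicting the hypothesis $n\nmid m+2$; therefore \eqref{Q(z)poly} has no nontrivial solution of finite order.

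I expect the main obstacle to lie in the third paragraph above: making the sectorial asymptotics fully rigorous. In particular one must show that the zeros of $f$ do not cluster in the sectors $S_{j}^{+}$ --- this is what justifies passing from ``$f'/f$ is small off the zeros'' to ``$f$ is bounded in $S_{j}^{+}$'' --- and one must make the Hille-type asymptotic integration in each $S_{j}^{-}$ uniform enough, while keeping honest track of the exceptional radius and angle sets in Gundersen's estimates, so that the indicator identities at the endpoints of $S_{j}^{-}$ genuinely hold. Carrying the extra factor $h$ through the argument should, by contrast, be routine: since $\rho(h)<n=\tfrac12\deg(2p)$, $h$ contributes at most a factor $\exp(\pm|z|^{\rho(h)+\varepsilon})$ in any estimate, which is swamped by the exponential of $p$ in each $S_{j}^{+}$ and is harmless in each $S_{j}^{-}$, so none of the leading-order balances of Zhang's proof is affected.
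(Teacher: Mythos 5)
Your outline is essentially sound and follows the same overall strategy as the paper's proof (which adapts Zhang's method): assume $\rho(f)<\infty$, show $f$ tends to a finite nonzero limit along almost every ray where $\delta(p,\theta)>0$, show that where $\delta(p,\theta)<0$ the equation is effectively a perturbation of $y''+Qy=0$ so the growth there is of order $(m+2)/2$, and then extract an arithmetic contradiction between the width of these sectors and the spacing $2\pi/(m+2)$ of the critical directions of $Q$. Where you genuinely diverge is in the implementation. The paper first makes the substitution \eqref{ftoy}, which turns the first-derivative term $he^{p}f'$ into the additive potential perturbation appearing in \eqref{transform}; that perturbation truly tends to zero in $\omega^{-}$, so Langley's Lemma \ref{Q(z)} (decay paths along critical rays of $Q$ in $\omega^{-}$) and the Herold-type bound of Lemma \ref{y''+ A(z)y = 0} apply directly, and the contradiction comes from Remark \ref{rem}: if the rays $(2i+1)\pi/n$ were not critical rays of $Q$, a domain of width at most $2\pi/(m+2)+\epsilon$ would be bounded by a curve on which $f\to0$ and a ray on which $f\to1$, forcing $\rho(f)>\tfrac{m+2}{2}$; hence $2\pi/n=k\cdot2\pi/(m+2)$, i.e. $m+2=kn$. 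You instead keep the original equation, invoke Hille-type asymptotic integration in $S_j^{-}$, and use the indicator function, concluding $m+2=2kn$; this is a legitimate alternative, and your sharper arithmetic matches the obstruction in Theorem \ref{2018queslongtion}(b). The caveat is exactly the step you flag: asymptotic integration with a perturbation entering through $f'$, whose coefficient decays like $e^{-cr^{n}}$ while $f$ grows like $\exp(r^{(m+2)/2})$ with $(m+2)/2>n$, is not an off-the-shelf statement, and the Stokes structure of $Q$ inside $S_j^{-}$ must be tracked for the endpoint identities; the cleanest repair is precisely the paper's substitution \eqref{ftoy}, after which the $S_j^{-}$ analysis concerns $y''+(Q+o(1))y=0$. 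Note also that $\rho(h)<n$, which you use for the lower bound on $|he^{p}|$ in $S_j^{+}$, is only implicit in the theorem's statement and should be assumed explicitly.
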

Following Lemma is due to Bank, et al.\cite{banklainelangley} that gives an estimate for an entire function with an integral order and the asymptotic properties on most rays of the function
$h(z)e^{P(z)}$. 
\begin{lemma}\cite{banklainelangley}\label{lcritical}
	Let $A(z) = h(z)e^{P(z)}$ be an entire function with $\lambda(A) < \rho(A) = n$, where $P(z)$ is a polynomial of degree $n$. Then, for
	every $\epsilon > 0,$ there exists $E \subset [0, 2\pi)$ of linear measure zero satisfying
	\begin{enumerate}[(i)]
		\item for $\theta \in [0, 2\pi)\setminus E$ with $\delta(P, \theta) > 0$, there exists $R > 1$ such that
		$$exp ((1 - \epsilon)\delta(P, \theta)r^n) \leq |A(re^{\iota\theta})|$$
		for $r > R$;
		\item for $\theta \in [0,2\pi)\setminus E$ with $\delta(P, \theta) < 0$, there exists $R > 1$ such that
		$$|A(re^{\iota\theta})| \leq exp ((1 - \epsilon)\delta(P, \theta)r^n)$$ 
		for $r > R$.
	\end{enumerate}
\end{lemma}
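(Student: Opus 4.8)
The plan is to factor $|A(re^{i\theta})|=|h(re^{i\theta})|\exp\!\big(\mathrm{Re}\,P(re^{i\theta})\big)$ and to estimate the two factors on very different scales: the exponential factor carries the dominant term $\delta(P,\theta)r^{n}$ and can be handled uniformly in $\theta$, while $|h|$ is of strictly smaller order and is therefore negligible on the scale $r^{n}$ — the only place an exceptional set is forced being the \emph{lower} bound for $|h|$. First I record the structure of $h$ coming from the hypothesis $\lambda(A)<\rho(A)=n$. Since $e^{P}$ is zero-free, the zeros of $A$ coincide with those of $h$, so $\lambda(h)=\lambda(A)$; writing $h=Ae^{-P}$ shows $\rho(h)\le n$, and Hadamard's factorization theorem (applied to $A$, whose order $n$ is a positive integer) realizes the factor $h$ as a canonical product with $\rho(h)=\lambda(h)=\lambda(A)<n$. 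I then fix a number $\beta$ with $\rho(h)<\beta<n$.

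Next I expand the exponent. With $P(z)=a_{n}z^{n}+\cdots+a_{0}$ and $a_{n}=|a_{n}|e^{i\psi}$, bounding the lower-degree terms gives, uniformly in $\theta$,
\[
\mathrm{Re}\,P(re^{i\theta})=\delta(P,\theta)\,r^{n}+O(r^{n-1}),\qquad \delta(P,\theta)=|a_{n}|\cos(n\theta+\psi).
\]
Since $\rho(h)<\beta$, the maximum-modulus estimate gives $|h(re^{i\theta})|\le M(r,h)\le\exp(r^{\beta})$ for every $\theta$ and all large $r$. This upper bound already proves (ii) with no exceptional set: for $\theta$ with $\delta(P,\theta)<0$,
\[
|A(re^{i\theta})|\le \exp\!\big(\delta(P,\theta)r^{n}+r^{\beta}+O(r^{n-1})\big)\le \exp\!\big((1-\epsilon)\delta(P,\theta)r^{n}\big)
\]
for $r>R$, because $r^{\beta}+O(r^{n-1})=o(r^{n})$ is eventually dominated by $-\epsilon\,\delta(P,\theta)r^{n}=\epsilon|\delta(P,\theta)|r^{n}$.

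The substance of the lemma, and what I expect to be the \emph{main obstacle}, is the matching lower bound for $|h|$ needed for (i): I must produce a set $E\subset[0,2\pi)$ of linear measure zero such that $\log|h(re^{i\theta})|\ge -r^{\beta}$ for every $\theta\notin E$ and all large $r$. The plan is to pass to the Hadamard factorization $h=\Pi\,e^{g}$, where $\Pi$ is the canonical product over the zeros of $h$ and $\deg g<n$; the polynomial factor contributes only $\mathrm{Re}\,g=O(r^{\deg g})=o(r^{\beta})$, so the problem reduces to a minimum-modulus estimate for $\Pi$ along rays. Because $\rho(\Pi)=\lambda(A)<n$, the zeros have exponent of convergence strictly below $n$ and so can accumulate in angle only along a negligible set of directions; a covering argument over the zeros then yields a set $E$ with $|E|=0$ off which $\log|\Pi(re^{i\theta})|\ge -r^{\beta}$ for large $r$. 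Establishing this uniform-in-$r$ lower bound while keeping $|E|=0$ is the delicate technical point and is exactly the content of the Bank--Laine--Langley estimate; it rests entirely on the strict inequality $\lambda(A)<n$.

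Finally I combine the pieces. For $\theta\in[0,2\pi)\setminus E$ with $\delta(P,\theta)>0$ and $r>R$,
\[
|A(re^{i\theta})|=|h(re^{i\theta})|\exp\!\big(\mathrm{Re}\,P(re^{i\theta})\big)\ge \exp\!\big(\delta(P,\theta)r^{n}-r^{\beta}-O(r^{n-1})\big)\ge \exp\!\big((1-\epsilon)\delta(P,\theta)r^{n}\big),
\]
since once more $r^{\beta}+O(r^{n-1})=o(r^{n})$ is absorbed by $\epsilon\,\delta(P,\theta)r^{n}$. Together with (ii), already proved above on the same scale, this gives both inequalities for all $\theta$ outside the single measure-zero set $E$, completing the argument.
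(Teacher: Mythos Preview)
The paper does not prove this lemma; it is quoted from \cite{banklainelangley} without argument, so there is no proof in the paper against which to compare your proposal. Your outline is the standard route and is essentially correct: split $|A|=|h|\exp(\mathrm{Re}\,P)$, expand $\mathrm{Re}\,P(re^{i\theta})=\delta(P,\theta)r^{n}+O(r^{n-1})$, and control $|h|$ on the smaller scale $r^{\beta}$ with $\rho(h)<\beta<n$. The upper bound (ii) then falls out immediately, and (i) reduces to a minimum-modulus estimate for $h$ along rays outside an exceptional angle set.

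Two remarks. First, your deduction that $\rho(h)=\lambda(A)$ does not follow from the hypotheses as literally stated: the conditions $A=he^{P}$, $\deg P=n$, $\lambda(A)<\rho(A)=n$ do \emph{not} by themselves force $\rho(h)<n$ (take $A=e^{z^{2}}$, $P=2z^{2}$, $h=e^{-z^{2}}$, where $\rho(h)=2=n$ and the conclusion of the lemma fails). The lemma is tacitly stated under the convention---made explicit where it is applied in the paper---that $\rho(h)<n$, i.e.\ that $P$ carries the full top-degree term of the Hadamard exponent of $A$; you should simply assume this rather than try to derive it. Second, you correctly isolate the lower bound $\log|h(re^{i\theta})|\ge -r^{\beta}$ for $\theta$ outside a set of linear measure zero as the substantive step, but you do not actually prove it, instead deferring to ``the Bank--Laine--Langley estimate.'' As a plan your identification of the obstacle and the ingredients is accurate; if a self-contained proof is wanted, this step must be carried out, for instance via a Boutroux--Cartan covering of the zeros of the canonical product or via Gundersen's angular logarithmic-derivative estimates (Lemma~\ref{lgundersen} in the paper).
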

The following Lemma is given by Langley\cite{langley1986complex}.

\begin{lemma}\cite{langley1986complex}\label{Q(z)}
	Let $S$ be the strip $$z = x + \iota y,\ \ x\geq x_0,\ \  |y|\leq 4.$$
	Suppose that in $S$
	$$Q(z) = a_nz^n + O(|z|^{n-2}),$$
	where $n$ is positive integer and $a_n > 0.$ Then, there exists a path $\Gamma$ tending to $\infty$ in $S$ such that all solutions of $$y''+ Q(z)y = 0$$ tend to zero on $\Gamma.$
\end{lemma}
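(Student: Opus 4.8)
The plan is to analyse $y'' + Q(z)y = 0$ by the Liouville--Green (WKB) method. The decisive observation is that in $S$ we have $z = x + \iota y$ with $x$ large and $|y| \leq 4$, so $Q(z) = a_n z^n\bigl(1 + O(|z|^{-2})\bigr)$ is large with nearly positive real part; consequently the equation is \emph{oscillatory} rather than exponential, and both independent solutions carry the slowly varying amplitude $|Q(z)|^{-1/4} \sim a_n^{-1/4}|z|^{-n/4}$, which already tends to $0$ because $n \geq 1$. The entire task therefore splits into (i) making the WKB asymptotics rigorous, and (ii) choosing $\Gamma$ so that the oscillatory exponential factor stays bounded, so that the amplitude decay wins for \emph{both} solutions.

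For (i), introduce the phase $\xi(z) = \int_{z_0}^z \sqrt{Q(t)}\,dt$ and the Liouville substitution $w = Q^{1/4} y$, using $\xi$ as the new independent variable. A direct computation converts the equation into a bounded perturbation of $w'' + w = 0$, namely $\frac{d^2 w}{d\xi^2} + (1 + \psi)w = 0$, where $\psi$ is the Schwarzian-type error term built from $Q'/Q$ and $Q''/Q$. For $Q \approx a_n z^n$ one checks $\psi = O(|z|^{-n-2})$, and, more importantly, that it is absolutely integrable in $\xi$: since $d\xi \approx \sqrt{a_n}\,z^{n/2}\,dz$, the product behaves like $\psi\,d\xi \asymp |z|^{-n/2-2}\,dz$, which is integrable at $\infty$ for every $n \geq 1$. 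The standard Liouville--Green error estimates then produce two solutions with $w(\xi) = e^{\pm\iota\xi}(1 + o(1))$, equivalently $y(z) = Q(z)^{-1/4}e^{\pm\iota\xi(z)}\bigl(1 + o(1)\bigr)$, the $o(1)$ being controlled by the total variation of the error-control function along the path.

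For (ii), I take $\Gamma$ to be an arc of the level curve $\{z \in S : \operatorname{Im}\xi(z) = \operatorname{Im}\xi(x_1)\}$ through a point $x_1 > x_0$ (an anti-Stokes line). Using $\xi(z) \approx \frac{2\sqrt{a_n}}{\,n+2\,}z^{(n+2)/2}$ together with $z = x + \iota y$, the condition $\operatorname{Im}\xi = \text{const}$ forces $y \asymp x^{-n/2}$, so $\Gamma$ hugs the positive real axis, remains in the strip $|y| \leq 4$ for $x$ large, tends to $\infty$, and avoids the bounded set of zeros of $Q$, so no turning point is met. Along $\Gamma$ the quantity $\operatorname{Im}\xi$ is bounded, whence $|e^{\pm\iota\xi}|$ is bounded; combined with $|Q|^{-1/4}\to 0$ this yields $y_1(z), y_2(z) \to 0$ on $\Gamma$ for the two WKB solutions. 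Since every solution is a linear combination $c_1 y_1 + c_2 y_2$, all solutions tend to $0$ on $\Gamma$, as asserted.

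The main obstacle is item (i): upgrading the formal WKB ansatz to honest asymptotics valid uniformly along $\Gamma$. Concretely, one must verify that the error-control function has finite total variation along the chosen path, so that the Liouville--Green remainder is genuinely $o(1)$ and not merely formally small, and one must confirm that the perturbation $O(|z|^{n-2})$ in $Q$ does not spoil the boundedness of $\operatorname{Im}\xi$ on $\Gamma$. This is precisely why $\Gamma$ is chosen as a (perturbed) anti-Stokes line rather than the bare real axis, along which the perturbation could otherwise contribute an unbounded imaginary phase when $n > 2$; one checks that keeping $\operatorname{Im}\xi$ exactly fixed displaces the curve from the axis by only $O(x^{-1})$, so it still tends to $\infty$ inside $S$. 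Once these estimates are in place, the amplitude decay $|Q|^{-1/4}\to 0$ delivers the conclusion at once.
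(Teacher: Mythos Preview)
The paper does not prove this lemma; it is quoted from Langley \cite{langley1986complex} and used as a black box in the proof of Theorem~\ref{B(z)polynomial}, so there is no in-paper argument to compare your proposal against.

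Your Liouville--Green outline is sound and is the natural route to this result (and essentially the one taken in Langley's original paper). The error term after the Liouville transformation $w=Q^{1/4}y$, $d\xi=Q^{1/2}\,dz$ is indeed
\[
\psi=\tfrac{5}{16}(Q')^{2}Q^{-3}-\tfrac14\,Q''Q^{-2}=O(|z|^{-n-2}),
\]
so $\int_{\Gamma}|\psi|\,|d\xi|=\int O(|z|^{-n/2-2})\,|dz|<\infty$ for every $n\ge 1$, and the asymptotics $y\sim Q^{-1/4}e^{\pm\iota\xi}$ are rigorous along $\Gamma$. The anti-Stokes curve $\{\operatorname{Im}\xi=\operatorname{Im}\xi(x_{1})\}$ through a real $x_{1}$ deviates from the positive real axis by at most $O(x^{-1})$ (this drift comes entirely from the $O(|z|^{n-2})$ remainder in $Q$; the leading term alone would give the real axis itself, not $y\asymp x^{-n/2}$ as you first write), hence $\Gamma$ stays in $S$, $|e^{\pm\iota\xi}|$ remains bounded on it, and the amplitude $|Q|^{-1/4}\asymp|z|^{-n/4}\to 0$ forces every solution to $0$. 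The only point worth tightening is that on an anti-Stokes line neither of $e^{\pm\iota\xi}$ is recessive, so you need the two-sided Liouville--Green bound that controls both independent solutions simultaneously via the finite total variation of the error-control function; you allude to this, but an explicit reference (e.g.\ Olver, \emph{Asymptotics and Special Functions}, Chapter~6) would close the argument cleanly.
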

The following Lemma gives the logarithmic estimate of a meromorphic function outside  an $R$-set.
\begin{lemma}\cite{laine1993nevanlinna}\label{f'(z)f(z)}
	Let $f$ be a meromorphic function of finite order. Then, there exists $N = N(f ) >
	0$ such that
	$$\left|\frac{f'(z)}{f(z)}\right|=O(r^N)$$
	holds outside an $R$-set.
\end{lemma}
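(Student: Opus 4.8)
The plan is to read off the bound from the differentiated Poisson--Jensen formula, isolating the contribution of the zeros and poles of $f$ and absorbing it into a thin exceptional set. Write $\rho = \rho(f) < \infty$; the rational case is immediate, so assume $f$ is transcendental, and fix $\epsilon > 0$. For a point $z$ with $|z| = r$ large I would apply the Poisson--Jensen formula on the disc $|w| \le R$ with $R = 2r$ and differentiate with respect to $z$. This represents $f'(z)/f(z)$ as the sum of a boundary integral,
$$\frac{1}{2\pi}\int_0^{2\pi} \log\bigl|f(Re^{i\phi})\bigr|\,\frac{2Re^{i\phi}}{(Re^{i\phi}-z)^2}\,d\phi,$$
together with two finite sums, over the zeros $a_\mu$ and the poles $b_\nu$ of modulus $< R$, of the terms $\dfrac{1}{z-a_\mu} + \dfrac{\overline{a_\mu}}{R^2 - \overline{a_\mu}z}$ and $\dfrac{1}{z-b_\nu} + \dfrac{\overline{b_\nu}}{R^2 - \overline{b_\nu}z}$ respectively.

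For the boundary integral, since $|Re^{i\phi} - z| \ge R - r = r$ on $|w| = R$, the kernel is $O(1/r)$, so the integral is $O(r^{-1})$ times $\int_0^{2\pi}\bigl|\log|f(Re^{i\phi})|\bigr|\,d\phi$. The latter is controlled by the Nevanlinna characteristic through $\int_0^{2\pi}\bigl|\log|f|\bigr|\,d\phi \le 2\pi\bigl(m(R,f) + m(R,1/f)\bigr) \le C\,T(R,f) + O(1)$, and finite order gives $T(R,f) = O(R^{\rho+\epsilon}) = O(r^{\rho+\epsilon})$, so the boundary term is $O(r^{\rho-1+\epsilon})$. The ``tail'' pieces $\overline{a_\mu}/(R^2-\overline{a_\mu}z)$ and $\overline{b_\nu}/(R^2-\overline{b_\nu}z)$ are each $O(1/R)$ in modulus, because $|a_\mu|,|b_\nu| < R$ and $|z| = R/2$ force $|R^2 - \overline{a_\mu}z| \ge R^2/2$; since there are $n(R,f)+n(R,1/f) = O(R^{\rho+\epsilon})$ of them, their total contribution is again $O(r^{\rho-1+\epsilon})$.

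The delicate terms are the singular sums $\sum 1/(z-a_\mu)$ and $\sum 1/(z-b_\nu)$, which blow up as $z$ approaches a zero or pole. Here I would build the exceptional set: around each zero $a_\mu$ and pole $b_\nu$ with $|a_\mu|,|b_\nu| \ge 1$ place the disc of radius $|a_\mu|^{-s}$, respectively $|b_\nu|^{-s}$, for a fixed exponent $s > \rho + \epsilon$ (the finitely many zeros and poles in $|w|<1$ contribute $O(1/r)$ and need no disc). Outside this union of discs one has $|z-a_\mu| \ge |a_\mu|^{-s}$, hence each term is $\le |a_\mu|^{s} \le R^{s}$, and summing over the $O(R^{\rho+\epsilon})$ points of modulus $< R = 2r$ gives $O(r^{\rho+\epsilon+s})$. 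Taking $N = \rho + \epsilon + s + 1$ then yields $|f'(z)/f(z)| = O(r^N)$ for every $z$ outside the union of discs. It remains to verify that this union is an $R$-set: the number of zeros and poles in the annulus $2^k \le |w| < 2^{k+1}$ is $O(2^{k(\rho+\epsilon)})$, so the radii contributed there sum to $O(2^{k(\rho+\epsilon-s)})$, which is summable in $k$ precisely because $s > \rho+\epsilon$; thus the total sum of radii is finite and the excluded set is an $R$-set.

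I expect the main obstacle to lie in the treatment of the singular sums: one must choose the disc radii small enough that $\sum r_j < \infty$, so that the exceptional set is genuinely an $R$-set, yet large enough that the lower bound $|z-a_\mu| \ge |a_\mu|^{-s}$ still produces a merely polynomial upper bound for the summed terms. These two requirements pull in opposite directions, and it is exactly the finite-order hypothesis, entering through the counting estimate $n(R,f) = O(R^{\rho+\epsilon})$, that guarantees a single admissible exponent $s$ exists and hence that a uniform power $r^N$ suffices.
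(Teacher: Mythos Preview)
The paper does not actually prove this lemma; it is quoted from Laine's monograph \cite{laine1993nevanlinna} and used as a black box, so there is no ``paper's own proof'' to compare against. That said, your argument is correct and is essentially the classical proof one finds in the cited reference: differentiate the Poisson--Jensen representation on a disc of radius $R=2r$, bound the boundary integral and the Blaschke ``tail'' terms by $O(r^{\rho-1+\epsilon})$ via $T(R,f)=O(R^{\rho+\epsilon})$ and $n(R,f)+n(R,1/f)=O(R^{\rho+\epsilon})$, and absorb the principal parts $\sum 1/(z-a_\mu)$, $\sum 1/(z-b_\nu)$ by excising discs of radius $|a_\mu|^{-s}$, $|b_\nu|^{-s}$ with $s>\rho+\epsilon$. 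Your verification that these discs form an $R$-set (finite sum of radii via the dyadic counting estimate) and that the remaining bound is a fixed power $r^N$ is exactly the balancing act the lemma requires, and your identification of the finite-order hypothesis as the hinge that makes a single exponent $s$ work is on point.

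One minor remark: the crude bound $1/|z-a_\mu|\le |a_\mu|^{s}\le R^{s}$ you use for \emph{every} term in the singular sum is wasteful for the zeros and poles far from $z$, and a slightly sharper splitting (near versus far) would give a better exponent $N$; but since the lemma only asserts the existence of \emph{some} $N$, your version is entirely sufficient.
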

The growth estimate in the following Lemma deduced in \cite{banklainelangley} from Herold Comparison Theorem \cite{herold1972vergleichssatz}.
\begin{lemma}\cite{langley1986complex}\label{y''+ A(z)y = 0}
	Suppose that $A(z)$ is an analytic in a sector containing the ray $\Gamma : re^{\iota\theta}$ and that as $r\to\infty$, $A(re^{\iota\theta}) = O(r^n)$ for some $n\geq 0.$ Then, all solutions of $y''+ A(z)y = 0$ satisfy
	$$\log^+ |y(re^{\iota\theta})| = O(r^{\frac{(n+2)}{2}})$$
	on $\Gamma$.
\end{lemma}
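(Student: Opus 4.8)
The plan is to restrict the equation to the ray $\Gamma$, convert it into a scalar ordinary differential equation in the real variable $r$, and then dominate the modulus of the solution by a real comparison function via the Herold Comparison Theorem. First I would parametrize $\Gamma$ by $z = re^{\iota\theta}$ and set $u(r) = y(re^{\iota\theta})$. Since differentiation along the ray satisfies $\frac{d}{dr} = e^{\iota\theta}\frac{d}{dz}$, the chain rule gives $u''(r) = e^{2\iota\theta}y''(re^{\iota\theta}) = -e^{2\iota\theta}A(re^{\iota\theta})u(r)$, so that $u$ solves $u'' + \beta(r)u = 0$ with $\beta(r) = e^{2\iota\theta}A(re^{\iota\theta})$ and $|\beta(r)| = |A(re^{\iota\theta})|$. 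By hypothesis there are constants $C > 0$ and $r_0 > 1$ with $|\beta(r)| \leq Cr^n$ for all $r \geq r_0$ on $\Gamma$; note that only the values of $A$ on $\Gamma$ enter, so the assumption that $A$ be analytic merely in a sector containing $\Gamma$ suffices.

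Next I would introduce the real majorant equation $V'' = Cr^n V$ and invoke the Herold Comparison Theorem. The role of that theorem is precisely to compare the growth of $|u|$ along $\Gamma$ with that of the dominant (increasing) solution $V$ of the real equation whose coefficient majorizes $|\beta|$. Choosing the initial data of $V$ at $r_0$ to dominate $|u(r_0)|$ and $|u'(r_0)|$, the theorem yields $|u(r)| \leq V(r)$ for $r \geq r_0$. I emphasize that the comparison equation must be taken with coefficient $+Cr^n$ (the majorant of $|\beta|$), so that its dominant solution controls $|u|$ from above.

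It then remains to estimate $V$. The comparison equation $V'' = Cr^n V$ is classical: its solutions are entire of order $(n+2)/2$, which one sees directly by checking that $W(r) = \exp(g(r))$, with $g(r) = \tfrac{2\sqrt{C}}{n+2}\,r^{(n+2)/2}$, is a supersolution for large $r$, since $g'(r) = \sqrt{C}\,r^{n/2}$ gives $(g')^2 = Cr^n$ while $g''$ is of lower order $r^{(n/2)-1}$, so that $W'' = (g'' + (g')^2)W \geq Cr^n W$. Hence $\log^+ V(r) = O(r^{(n+2)/2})$ as $r \to \infty$, and combining with the comparison bound gives
$$\log^+|y(re^{\iota\theta})| = \log^+|u(r)| \leq \log^+ V(r) = O\big(r^{\frac{n+2}{2}}\big)$$
on $\Gamma$, as required.

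The main obstacle I anticipate is obtaining the sharp exponent $(n+2)/2$ rather than the weaker $n+1$ that a naive argument produces: a direct energy estimate for $E(r) = |u(r)|^2 + |u'(r)|^2$ only yields $|E'(r)| \leq (1 + Cr^n)E(r)$, and hence, by Gronwall's inequality, $\log E(r) = O(r^{n+1})$. The improvement to the WKB exponent $(n+2)/2$ is exactly what the Herold Comparison Theorem supplies, by respecting the oscillatory structure of $u'' + \beta u = 0$ instead of bounding $|\beta|$ crudely; care in setting up the comparison system and its initial conditions at $r_0$ is therefore the crux of the argument.
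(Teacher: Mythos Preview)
Your approach is correct and matches what the paper indicates: the paper does not give its own proof of this lemma but cites it from \cite{langley1986complex} and remarks that the estimate was ``deduced in \cite{banklainelangley} from Herold Comparison Theorem \cite{herold1972vergleichssatz}.'' Your reduction to the real parameter $r$ along $\Gamma$, application of Herold's comparison against $V''=Cr^nV$, and extraction of the WKB exponent $(n+2)/2$ is precisely that route, so there is nothing to contrast.
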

\begin{remark}\label{rem}
	If $f(z)\to a$ as $z\to\infty$ along a straight line, $f(z)\to b$ as $z\to\infty$ along another straight line and $f(z)$ is analytic and bounded in the angle between, then $a = b$ and $f(z)\to a$ uniformly in the angle. The straight lines may be replaced by the curves approaching $\infty$.
\end{remark}
\begin{proof}[\underline{Proof of Theorem \rm{\ref{B(z)polynomial}}}]
	We assume that \eqref{Q(z)poly} has a solution $f(z)$ with finite order. Set
	\begin{equation}\label{ftoy}
		f = y \exp\{-\frac{1}{2}\int_0^z  h(z)e^{p(z)}dz\}.
	\end{equation}
	Equation \eqref{Q(z)poly} can be transformed into
	\begin{equation}\label{transform}
		y'' +\left(Q(z)-\frac{1}{4} (he^{p(z)})^2-\frac{1}{2}h'(z)e^{p(z)}-\frac{1}{2}h(z)p'(z)e^{p(z)}\right)y = 0.
	\end{equation}
	By a translation, we may assume that
	$$Q(z) = a_mz^m + a_{m-2}z^{m-2} +\cdots, \ \  m > 2.$$
	We define the critical ray for $Q(z)$ as those ray $re^{\iota\theta_j}$ for which
	$$\theta_j=\frac{-\arg a_m+2j\pi}{m+2},$$
	where $j = 0, 1, 2, \ldots, m + 1$ and note that the substitution $z = xe^{\iota\theta_j}$ transforms equation \eqref{transform} into
	\begin{equation}\label{wrtx}
		\frac{d^2y}{dx^x} + (Q_1(x) + P_1(x))y = 0,
	\end{equation}
	where
	$$Q_1(x) =\alpha_1 x^m + O(x^{m-2}), \alpha_1 > 0$$
	and
	$$P_1(x) = -\frac{1}{4} (he^{p(xe^{\iota\theta_j})})^2-\frac{1}{2}h'(xe^{\iota\theta_j})e^{p(xe^{\iota\theta_j})}-\frac{1}{2}h(xe^{\iota\theta_j})p'(xe^{\iota\theta_j})e^{p(xe^{\iota\theta_j})}.$$ 
	For the polynomial $p(z)$ with degree $n$, set $p(z) = (\alpha + \iota\beta) z^n + p_{n-1}(z)$ with $\alpha , \  \beta$ real, and denote $\delta(p, \theta) = \alpha\cos n\theta - \beta\sin n\theta .$ The rays
	$$\arg z = \theta_k = \frac{arc\tan\frac{\alpha}{\beta} + k\pi}{n},\\ k = 0, 1, 2, \ldots, 2n-1$$
	satisfying $\delta(p, \theta_k) = 0$ can split the complex domain into $2n$ equal angular domains. Without loss of generality, denote these angle domains as
	$$\omega^+=\{re^{\iota\theta}:0<r<+\infty ,\frac{2i\pi}{n}<\theta<\frac{(2i+1)}{n} \},$$
	$$\omega^-=\{re^{\iota\theta}:0<r<+\infty ,\frac{(2i+1)}{n}<\theta<\frac{2(i+1)}{n} \},$$
	$\iota = 0, 1,\ldots , n - 1,$ where $\delta(p, \theta) > 0$ on $\omega^+$ and $\delta(p, \theta) < 0$ on $\omega^-$. By Lemma \ref{lcritical}, we obtain 
	\begin{align*}
		|P_1(x)|\leq & |(h(xe^{\iota\theta_j})e^{p(xe^{\iota\theta_j})})^2|+|h'(xe^{\iota\theta_j})e^{p(xe^{\iota\theta_j})}|+|h(xe^{\iota\theta_j})e^{p(xe^{\iota\theta_j})}p'(xe^{\iota\theta_j})| \\
		&\leq \exp\{\delta(P,\theta)x^n\}+\exp\{\frac{1}{2}\delta(P,\theta)x^n\}+\exp\{\frac{1}{2}\delta(P,\theta)x^n\}O(x^{n-1})\to0
	\end{align*}
	for $xe^{\iota\theta_j}\in\omega^-$ as $x\to\infty$, then by Lemma \ref{Q(z)} and \eqref{wrtx}, for any critical line $\arg z = \theta_j$ lying in $\omega^-$ there exists a
	path $\Gamma_{\theta_j}$ tending to $\infty$, such that $\arg z\to\theta_j$ on $\Gamma_{\theta_j}$ while $y(z)\to 0$ there. Moreover, by
	\begin{align}\label{to1}
		|\exp\{-\frac{1}{2}\int_0^zh(z)e^{p(z)}\}| & \leq\exp\{\frac{1}{2}\left|\int_0^zh(z)e^{p(z)}\right|\}\\ \nonumber
		& \leq\exp\{\frac{1}{2}r\exp\{\delta(p, \theta )r^n\}\}\to 1
	\end{align}
	for $z >\omega^-$ as $r\to\infty$, together with \eqref{ftoy} we have $f (z)\to 0$ along $\Gamma_{\theta_j}$ tending to $\infty$.
	Setting $V =\frac{f'}{f}$ , equation \eqref{Q(z)poly} can be written as
	$$V'+V^2 + h(z)e^{p(z)}V + Q(z) = 0.$$
	By Lemma \ref{f'(z)f(z)}, we have
	$$|V'|+|V|^2 = O(|z|^N)$$
	outside an $R$-set $U$, where $N$ is a positive constant. Moreover, if $z = re^{\iota\phi}\in\omega^+$ is such that the ray $\arg z =\phi$ meets only finitely many discs of $U$ we see that $V = o(|z|^{-2})$ as $z$ tends to $\infty$ on this ray and hence $f$ tends to a finite, nonzero limit. Applying this reasoning to a set of $\phi$ outside a set of $0$ measure we deduce by the Phragm\'en-Lindel\"{o}f principle that without loss of generality, for any small enough given positive $\epsilon$,
	\begin{equation}\label{fto1}
		f (re^{\iota\theta})\to 1,
	\end{equation}
	as $r\to\infty$ with
	$$z=re^{\iota\theta}\in\omega^+_\epsilon =\{z=re^{\iota\theta}: 0<r<\infty , \frac{2i\pi}{n}+\epsilon <\theta <\frac{(2i+1)\pi}{n}-\epsilon\}.$$
	For any $z = re^{\iota\theta}\in\omega^-$, we have that $\delta(p, \theta) < 0$, and by Lemma \ref{lcritical}, we have
	\begin{align}\label{abc}
		|Q(z)-\frac{1}{4} (he^{p(z)})^2-\frac{1}{2}h'(z)e^{p(z)} & -\frac{1}{2}h(z)p'(z)e^{p(z)}| 
		\leq |Q(z)|+|(he^{p(z)})^2| \nonumber \\
		& +|h'(z)e^{p(z)}|+|h(z)p'(z)e^{p(z)}| \nonumber \\ 
		&\leq O(r^m)+\exp\{\delta(P,\theta)x^n\}+\exp\{\frac{1}{2}\delta(P,\theta)x^n\} \nonumber \\ 
		&+\exp\{\frac{1}{2}\delta(P,\theta)x^n\}O(x^{n-1}) \nonumber \\ 
		& \leq O(r^m)
	\end{align}
	for sufficiently large $r$. Applying, Lemma \ref{y''+ A(z)y = 0} to \eqref{transform} and together with \eqref{abc}, $y(z)$ satisfies
	$$\log^+ |y(re^{\iota\theta}| = O(r^{\frac{m+2}{2}})$$
	as $r\to\infty$ for any $z = re^{\iota\theta}\in\omega^-.$ From \eqref{transform} and \eqref{to1}, we have
	\begin{equation}\label{O(r)}
		\log^+|f(re^{\iota\theta}| = O(r^\frac{m+2}{2})
	\end{equation}
	as $r\to\infty$ for any $z = re^{\iota\theta}\in\omega^-.$
	On the rays $\arg z = \theta_k$ such that $\delta(p, \theta_k) = 0,$ we have $|e^{p(z)}| =|e^{p_{n-1}(z)}|.$ Consider the two cases
	$\delta(p_{n-1}, \theta_k) > 0$ or \linebreak $\delta(p_{n-1}, \theta_k) < 0$, by the same method above, we get $f (z)\to 1$ or $\log^+ |f (z)| = O(r^\frac{m+2}{2}),$
	respectively, on the ray $\arg z = \theta_k.$ If $\delta(p_{n-1}, \theta_k) = 0$ also, repeating these arguments again. Finally, we deduce
	that either $f (z)\to 1$ or $\log ^+|f(z)| = O(r^\frac{m+2}{2})$ on the rays $\arg z = \theta_k ,\ \ k = 0, 1,\ldots , 2n-1.$ Thus, \eqref{ftoy}, \eqref{fto1}, \eqref{O(r)}
	and the fact $\epsilon$ is arbitrary imply that, by the Phragm\'en-Lindel\"{o}f principle,
	\begin{equation}\label{orderf<m+22}
		\rho(f)\leq\frac{m+2}{2}.
	\end{equation}
	We claim that $\frac{(2i+1)}{n}, \ \ (i = 0, 1,\ldots, n-1)$ are critical rays for $Q(z)$. Otherwise, there exists a critical $\theta_j$ for $Q(z)$ in 
	$$\frac{2i+1\pi}{n}<\theta_j<\frac{2(i+1)\pi}{n}+\frac{2\pi}{m+2} \ \ (i= 0, 1, \ldots , n-1) $$
	because $m+2 > 2n.$ This implies the existence of an unbounded domain of angular measure at most $\frac{2\pi}{m+2}+\epsilon$,
	bounded by a path on which $f (z)\to 0$ and a ray on which $f (z)\to 1.$ By Remark \ref{rem} implies that $\rho(f ) >\frac{m+2}{2},$ contradicting \eqref{orderf<m+22}. Then, there exists a positive integer $k$ satisfying $\frac{2\pi}{n} = k\frac{2}{m+2},$ that is, $m + 2 = kn$, which contradicts $n \nmid m + 2.$ Thus, we complete the proof.
\end{proof}	
Theorem \ref{mainth2} is motivated by Theorem \ref{kumar2021growth.} given by Kumar and Saini\cite{kumar2021growth}. They considered, $A(z)$ has Fabry gaps and $\rho (B) < \rho (A)$. We changed the conditions on $A(z)$ to have multiply connected Fatou component.

\begin{theorem}\cite{kumar2021growth}\label{kumar2021growth.}
	Let $A(z)$ and $B(z)$ be an entire functions such that $\rho (B) < \rho (A)$ and
	$A(z)$ has Fabry gaps. Then, $\rho(f) = \infty$ and $\rho_2(f) = \rho (A),$
	where $f$ is a non-trivial solution of equation \eqref{2order}.
	\end{theorem}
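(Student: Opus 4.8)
To establish Theorem~\ref{kumar2021growth.} I would combine Wiman--Valiron theory with the characteristic growth behaviour of Fabry gap series. The plan is to prove the hyper-order equality $\rho_2(f)=\rho(A)$ directly; the claim $\rho(f)=\infty$ is then automatic, since a finite-order entire function has hyper-order $0$ while $\rho(A)>\rho(B)\ge 0$. First a reduction: a non-trivial solution $f$ of \eqref{2order} must be transcendental. Indeed $A$ is transcendental (it has Fabry gaps), and if $f$ were a non-zero polynomial then either $f'\equiv0$, forcing $Bf=-f''=0$ and hence $B\equiv0$ — impossible — or $f'\not\equiv0$, whence $Af'=-Bf-f''$ with $f'$ a non-zero polynomial gives $\rho(A)=\rho(Af')=\rho(Bf)=\rho(B)$, again contradicting the hypothesis. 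The decisive external input is the classical minimum-modulus property of Fabry gap functions: for every $\varepsilon>0$ there is a set $E_0\subset(1,\infty)$ of finite logarithmic measure such that
$$\min_{|z|=r}|A(z)|\ \ge\ M(r,A)^{1-\varepsilon}\qquad(r\notin E_0),$$
so that on almost every circle $A$ is uniformly large and there are no exceptional directions for $A$ to be small.

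Next I would invoke Wiman--Valiron theory (see, e.g., \cite{laine1993nevanlinna}). Let $\nu_f(r)$ be the central index of $f$ and, for large $r$, choose $z_r$ with $|z_r|=r$ and $|f(z_r)|=M(r,f)$. Then there is a set $E_1$ of finite logarithmic measure with
$$\frac{f^{(k)}(z_r)}{f(z_r)}=\Bigl(\frac{\nu_f(r)}{z_r}\Bigr)^{k}\bigl(1+o(1)\bigr),\qquad k=1,2,\quad r\notin E_1 .$$
Dividing \eqref{2order} by $f$, evaluating at $z_r$, and solving for $A(z_r)$ gives
$$A(z_r)=-\frac{\nu_f(r)}{z_r}\bigl(1+o(1)\bigr)-B(z_r)\,\frac{z_r}{\nu_f(r)}\bigl(1+o(1)\bigr),$$
and hence, using $\nu_f(r)\to\infty$ and $|B(z_r)|\le M(r,B)\le\exp\!\bigl(r^{\rho(B)+\varepsilon}\bigr)$,
$$|A(z_r)|\ \le\ \frac{\nu_f(r)}{r}\bigl(1+o(1)\bigr)+r\exp\!\bigl(r^{\rho(B)+\varepsilon}\bigr)\qquad(r\notin E_1).$$

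Now I would coordinate the exceptional sets. Pick $R_n\to\infty$ with $\log M(R_n,A)\ge R_n^{\rho(A)-\varepsilon}$ (if $\rho(A)=\infty$, replace the exponent by an arbitrary large constant). Since $E_0\cup E_1$ has finite logarithmic measure, for all large $n$ the interval $[R_n,2R_n]$ is not contained in $E_0\cup E_1$, so choose $r_n\in[R_n,2R_n]\setminus(E_0\cup E_1)$; monotonicity of $M(\cdot,A)$ then gives $\log M(r_n,A)\ge R_n^{\rho(A)-\varepsilon}\ge r_n^{\rho(A)-2\varepsilon}$ for $n$ large. The Fabry estimate applied on $|z|=r_n$ at the point $z_{r_n}$ yields $|A(z_{r_n})|\ge M(r_n,A)^{1-\varepsilon}\ge\exp\!\bigl(r_n^{\rho(A)-3\varepsilon}\bigr)$. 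Fixing $\varepsilon$ so small that $\rho(B)+\varepsilon<\rho(A)-3\varepsilon$ (possible since $\rho(B)<\rho(A)$), the last display of the previous paragraph forces $\nu_f(r_n)\ge \tfrac12 r_n\exp\!\bigl(r_n^{\rho(A)-3\varepsilon}\bigr)$ for large $n$, so that, recalling $\rho_2(f)=\limsup_{r\to\infty}\frac{\log\log\nu_f(r)}{\log r}$,
$$\rho_2(f)\ \ge\ \rho(A)-3\varepsilon,$$
and letting $\varepsilon\to0$ gives $\rho_2(f)\ge\rho(A)$; in particular $\rho_2(f)>0$, so $\rho(f)=\infty$. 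For the reverse inequality I would use the standard bound $\rho_2(f)\le\max\{\rho(A),\rho(B)\}=\rho(A)$, which also follows from the same substitution: from $(\nu_f(r)/z_r)^2=-A(z_r)\nu_f(r)/z_r-B(z_r)+o(\cdot)$ one gets $\nu_f(r)/r\le 2\max\{M(r,A),M(r,B)^{1/2}\}\le\exp\!\bigl(r^{\rho(A)+\varepsilon}\bigr)$ for $r\notin E_1$ large, hence $\rho_2(f)\le\rho(A)+\varepsilon$. Combining, $\rho_2(f)=\rho(A)$.

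The step I expect to be the main obstacle is the bookkeeping of exceptional sets in the previous paragraph: the Fabry gap minimum-modulus estimate is only guaranteed off a set of finite logarithmic measure, so one cannot simply intersect that set with an arbitrary sequence realizing the order of $A$, and the $[R_n,2R_n]$ device — leaning on the monotonicity of the maximum modulus — is exactly what repairs this. A smaller technical point is the reduction to $f$ transcendental, needed so that $\nu_f(r)\to\infty$ and the Wiman--Valiron asymptotics apply. One could alternatively avoid Wiman--Valiron and argue by logarithmic-derivative estimates and the Phragm\'en--Lindel\"of principle, in the spirit of the proof of Theorem~\ref{B(z)polynomial}; the conceptual content is the same — Fabry gaps leave $f$ no thin sector in which to decay — but the Wiman--Valiron route is the more direct one here.
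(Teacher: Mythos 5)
You should note at the outset that the paper contains no proof of this statement: it is quoted from Kumar and Saini \cite{kumar2021growth} purely as motivation, so the only in-paper benchmark is the proof of the analogous Theorem \ref{mainth2}, where the Fabry-gap hypothesis is replaced by a multiply-connected Fatou component. That proof avoids Wiman--Valiron theory altogether: it combines Gundersen's logarithmic-derivative estimate $\left|f''/f'\right|\le|z|^{2\rho(f)}$ (Lemma \ref{lgundersen}), the Pant--Saini bound $\left|f/f^{(m)}\right|\le 2r^{m}$ at maximum-modulus points of $f$ (Lemma \ref{lpantsaini}), and a minimum-modulus lemma for $A$ (Lemma \ref{lzheng}; in the Fabry-gap setting one would substitute the Fuchs-type gap-series estimate), and then reaches a contradiction from $M(r,A)^{\gamma}\le 2r\exp(r^{\beta})(1+o(1))$ on a suitable unbounded set of radii. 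Your central-index route is genuinely different and buys more: working at the maximum-modulus points of $f$ you obtain the quantitative bound $\nu_{f}(r_{n})\ge\tfrac12 r_{n}\exp\bigl(r_{n}^{\rho(A)-3\varepsilon}\bigr)$, hence $\rho_{2}(f)\ge\rho(A)$, which together with the standard inequality $\rho_{2}(f)\le\max\{\rho(A),\rho(B)\}$ gives the full hyper-order statement $\rho_{2}(f)=\rho(A)$ --- a part of the conclusion that the contradiction-style argument used for Theorem \ref{mainth2} never actually addresses, even though it is asserted there. Your argument also needs no a priori assumption $\rho(f)<\infty$, and the $[R_{n},2R_{n}]$ device is exactly the right way to reconcile the exceptional sets with a sequence realizing the order of $A$.

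Two repairs are needed, both routine. First, the Fabry-gap minimum-modulus theorem is normally stated for $A$ of \emph{finite} order and with an exceptional set of logarithmic \emph{density} zero, not of finite logarithmic measure; a density-zero set can still swallow some intervals $[R_{n},2R_{n}]$, so replace them by $[R_{n},R_{n}^{1+\delta}]$, whose logarithmic measure $\delta\log R_{n}$ eventually exceeds the $o(\log R_{n})$ measure of the exceptional set below $R_{n}^{1+\delta}$; this yields $\log M(r_{n},A)\ge r_{n}^{(\rho(A)-\varepsilon)/(1+\delta)}$, and letting $\delta\to0$ after $\varepsilon\to0$ recovers $\rho_{2}(f)\ge\rho(A)$. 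The case $\rho(A)=\infty$ should simply be set aside (the gap lemma is not available there), which is harmless since the theorem is understood for finite-order $A$. Second, in the upper bound you should say a word about removing the finite-logarithmic-measure exceptional set (monotonicity of $\nu_{f}$ on an interval $[r,Cr]$ meeting the good set suffices) and make explicit that you are using the standard characterization $\rho_{2}(f)=\limsup_{r\to\infty}\log\log\nu_{f}(r)/\log r$. With these adjustments the proposal is sound.
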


\begin{thm}\label{mainth2}
	Let $A(z)$ be a transcendental entire function with a multiply-connected Fatou component and $B(z)$ be an entire function satisfying $\rho(B)<\rho(A)$. Then, every non-trivial solution of equation \eqref{2order} is of infinite order. Moreover,
	$$\rho_{2}(f)=\rho(A).$$
\end{thm}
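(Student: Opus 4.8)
\noindent\emph{A proposed line of proof.}
I would argue by contradiction, and separate the statement into the two assertions ``$\rho(f)=\infty$'' and ``$\rho_{2}(f)=\rho(A)$'', treating $\rho_{2}(f)\le\rho(A)$ as a soft a priori fact. The engine is Wiman--Valiron theory for a putative solution $f$, fed through equation \eqref{2order}, together with one quantitative input about a transcendental entire $A$ possessing a multiply connected Fatou component. That input — which comes from the structure theory of multiply connected wandering domains (Baker; Zheng; Bergweiler--Rippon--Stallard) and is used in the same spirit in \cite{mehra2022growth} — provides a sequence $r_n\to\infty$, arising from the round annuli contained in the orbit of the wandering domain (so that the ratio of outer to inner radius of these annuli tends to $\infty$), along which the minimum modulus $m(r_n,A):=\min_{|z|=r_n}|A(z)|$ is large: quantitatively $m(r_n,A)>r_n\,M(r_n,B)$ for all large $n$ (available because $\rho(B)<\rho(A)$) and $\log m(r_n,A)\ge(1-o(1))\log M(r_n,A)$ with $\log m(r_n,A)/\log r_n\to\infty$. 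Since the $r_n$ come from fat annuli, I may also insist that they avoid the finite-logarithmic-measure exceptional set attached to the Wiman--Valiron estimates, and I may assume $f$ is transcendental (an equation with transcendental coefficients has no polynomial solution).

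\smallskip
\noindent Granting this, assume $f\not\equiv0$ solves \eqref{2order} with $\rho(f)<\infty$. By Wiman--Valiron theory (see \cite{laine1993nevanlinna}), with $\nu_f(r)$ the central index and $z_r$ chosen so that $|z_r|=r$, $|f(z_r)|=M(r,f)$, one has $f^{(j)}(z_r)/f(z_r)=(\nu_f(r)/z_r)^j(1+o(1))$ for $j=1,2$ as $r\to\infty$ outside a set of finite logarithmic measure. Dividing \eqref{2order} by $f(z_r)$ and then by $\nu_f(r)/z_r$ yields
$$A(z_r)\,(1+o(1))\ =\ -\,\frac{\nu_f(r)}{z_r}\,(1+o(1))\ -\ B(z_r)\,\frac{z_r}{\nu_f(r)},$$
so that, taking $r=r_n$ and using $|A(z_{r_n})|\ge m(r_n,A)$,
$$m(r_n,A)\ \le\ 4\,\frac{\nu_f(r_n)}{r_n}\ +\ 2\,M(r_n,B)\,\frac{r_n}{\nu_f(r_n)}\qquad(n\ \text{large}).$$
One of the two summands dominates. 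If the second one is $\ge\tfrac12 m(r_n,A)$, then $\nu_f(r_n)\le 4r_nM(r_n,B)/m(r_n,A)<4$ by the choice of $r_n$, contradicting $\nu_f(r_n)\to\infty$. Hence the first summand dominates, $\nu_f(r_n)\ge\tfrac18\,r_n\,m(r_n,A)$, and since $\log m(r_n,A)/\log r_n\to\infty$ this gives $\log\nu_f(r_n)/\log r_n\to\infty$; as $\rho(f)=\limsup_{r\to\infty}\log\nu_f(r)/\log r$, we conclude $\rho(f)=\infty$, a contradiction. Thus every non-trivial solution of \eqref{2order} has infinite order.

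\smallskip
\noindent For the hyper-order, $\rho_{2}(f)\le\max\{\rho(A),\rho(B)\}=\rho(A)$ is the standard a priori bound for solutions of linear differential equations with entire coefficients. For the reverse inequality I would re-run the dichotomy above \emph{without} assuming $\rho(f)<\infty$ (the second case is still impossible), so along $(r_n)$ one has $\nu_f(r_n)\ge\tfrac18 r_n m(r_n,A)$, whence $\log M(2r_n,f)\ge(\log 2)\,\nu_f(r_n)$ by $\log\mu(r,f)=\int^r\frac{\nu_f(t)}{t}\,dt+O(1)\le\log M(r,f)$ and monotonicity of $\nu_f$; one then transfers the minimum-modulus estimate for $A$ to the radii realising $\rho(A)$, exactly as in the proof of Theorem \ref{kumar2021growth.} in \cite{kumar2021growth}, and combines with the upper bound to get $\rho_{2}(f)=\rho(A)$.

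\smallskip
\noindent The part I expect to be genuinely delicate — the main obstacle — is isolating and correctly exploiting the quantitative growth/minimum-modulus input for a coefficient with a multiply connected Fatou component: one needs a single sequence $r_n\to\infty$ of Wiman--Valiron-admissible radii along which $m(r_n,A)$ both (i) exceeds $r_nM(r_n,B)$, which disposes of the wrong branch of the dichotomy and forces $\rho(f)=\infty$, and (ii) is, after reduction to the radii on which the order of $A$ is attained and use of the comparability $\log m(r_n,A)\asymp\log M(r_n,A)$, large enough to pin down $\rho_{2}(f)\ge\rho(A)$ without overshooting the a priori bound $\rho_{2}(f)\le\rho(A)$. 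Matching these requirements against the actual geometry of multiply connected wandering domains — in particular checking that the annuli supplied by the structure theory meet both the admissible radii and, in the appropriate sense, the order-realising radii of $A$ — is where the bookkeeping must be done with care; everything else is the routine Wiman--Valiron-plus-equation computation sketched above.
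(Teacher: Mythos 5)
Your strategy is essentially the paper's own: argue by contradiction, evaluate equation \eqref{2order} at a maximum-modulus point of $f$ on radii drawn from the fat annuli supplied by the multiply connected Fatou component of $A$, use a minimum-modulus versus maximum-modulus comparison for $A$ on those annuli, and make the $B$-term negligible via $\rho(B)<\rho(A)$. The difference is only the technical engine: you use Wiman--Valiron central-index asymptotics (and your dichotomy computation with $\nu_f$ is sound, as is the remark that the fat annuli let you avoid the exceptional set of finite logarithmic measure), whereas the paper uses Gundersen's logarithmic-derivative estimate (Lemma \ref{lgundersen}) for $f''/f'$, the Pant--Saini bound $|f(z_r)/f^{(m)}(z_r)|\le 2r^m$ at maximum-modulus points (Lemma \ref{lpantsaini}), and Zheng's Lemma \ref{lzheng} giving $M(r,A)^{\gamma}\le |A(re^{\iota\theta})|$ on the annuli. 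Either implementation carries the same weight; yours is arguably the more classical route.

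Two caveats. First, the ``quantitative input'' you take as a black box is not a single citable fact. What the structure theory (Zheng \cite{zheng}, i.e.\ Lemma \ref{lzheng}) provides is $m(r,A)\ge M(r,A)^{\gamma}$ for a \emph{fixed} $0<\gamma<1$ on the annuli (the $(1-o(1))$-exponent refinement is unnecessary); but your key inequality $m(r_n,A)>r_nM(r_n,B)$ does not follow from $\rho(B)<\rho(A)$ alone -- it needs $\gamma\log M(r,A)$ to dominate $\log r+r^{\beta}$ (with $\rho(B)<\beta<\rho(A)$) on the dynamically determined annuli, i.e.\ that the growth of $A$ is actually visible there. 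This is exactly the point at which the paper, having derived $M(r,A)^{\gamma}\le 2r\exp(r^{\beta})(1+o(1))$ along those radii, concludes $\rho(A)\le\beta$; so you are making the same leap as the paper, only packaged as a hypothesis rather than as the final contradiction. In a written-out proof this must be argued, not cited, and you correctly flag it as the delicate spot. Second, the hyper-order claim: your sketch (transferring the minimum-modulus estimate to radii realising $\rho(A)$, as in Theorem \ref{kumar2021growth.} of \cite{kumar2021growth}) is precisely what is not automatic here, since in the Fabry-gap setting the comparison between $\log m(r,A)$ and $\log M(r,A)$ holds outside a small set of radii, while for a multiply connected Fatou component it is only available on the annuli coming from the wandering domain, which need not meet the order-realising radii of $A$. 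Note, however, that the paper's own proof stops after the infinite-order conclusion and never establishes $\rho_2(f)=\rho(A)$ at all, so on this point your attempt is not behind the paper -- it is simply not complete either.
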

Lemma \ref{lgundersen} is given by Gundersen\cite{gundersen}. He generalized the estimates of logarithmic derivatives of transcendental meromorphic function of finite order. 	
\begin{lemma}\cite{gundersen}\label{lgundersen}
Let $f$ be a transcendental meromorphic function with
finite order and $(k, j)$ be a finite pair of integers that satisfies $k > j \geq 0$ and let $\epsilon>0$ be a given constant. Then following statements holds:
\begin{enumerate}[(a)]
		\item  there exists a set $E_1 \subset [0, 2\pi]$ with linear measure zero such that for $\theta \in [0, 2\pi) \setminus E_1$ there exists $R(\theta) >0$ such that 
		$$\left|\frac{f^{(k)}(z)}{f^{(j)}(z)}\right|\leq|z|^{(k-j)(\rho(f)-1+\epsilon)}$$
		for all $k, j$; $| z |> R(\theta)$ and $arg z = \theta$
		\item there exists a set $E_2 \subset (1,\infty)$ with finite logarithmic measure  such that for all $|z|\not\in E_2\cup[0,1]$ such that inequality in (a) holds  for all $k, j$ and $| z |\geq R(\theta)$.
		\item there exists a set $E_3 \subset [0,\infty)$ with finite linear measure  such that for all $|z|\not\in E_3$ such that 
		$$\left|\frac{f^{(k)}(z)}{f^{(j)}(z)}\right|\leq|z|^{(k-j)(\rho(f)+\epsilon)}$$
		holds  for all $k, j$.
	\end{enumerate}
\end{lemma}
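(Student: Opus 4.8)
The plan is to obtain all three statements from a single base estimate for the ordinary logarithmic derivative of a finite-order meromorphic function, followed by a telescoping argument. Since differentiation preserves the order of a transcendental meromorphic function, each derivative $f^{(i)}$ is again transcendental meromorphic of order $\rho(f)$, so the factorisation
\[
\frac{f^{(k)}}{f^{(j)}} = \prod_{i=j}^{k-1} \frac{\bigl(f^{(i)}\bigr)'}{f^{(i)}}
\]
reduces the problem to estimating the $k-j$ logarithmic derivatives $\bigl|(f^{(i)})'/f^{(i)}\bigr|$. If each such factor is bounded by $|z|^{\rho(f)-1+\epsilon}$ (resp.\ $|z|^{\rho(f)+\epsilon}$) outside its own exceptional set, then intersecting the finitely many good regions and absorbing constants yields the stated bounds with exponents $(k-j)(\rho(f)-1+\epsilon)$ and $(k-j)(\rho(f)+\epsilon)$; here one uses that a finite union of null sets is null, and that a finite union of sets of finite logarithmic (resp.\ linear) measure again has finite measure.

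For the base estimate I would start from the Poisson--Jensen formula for a meromorphic $g$ on the disc $|w|\le R$, with zeros $a_\mu$ and poles $b_\nu$ in $|w|<R$, and differentiate in $z$ to get
\[
\frac{g'(z)}{g(z)} = \frac{1}{2\pi}\int_0^{2\pi}\log\bigl|g(Re^{i\phi})\bigr|\,\frac{2Re^{i\phi}}{(Re^{i\phi}-z)^2}\,d\phi + \sum_{\mu}\!\left(\frac{1}{z-a_\mu}+\frac{\overline{a_\mu}}{R^2-\overline{a_\mu}z}\right) - \sum_{\nu}\!\left(\frac{1}{z-b_\nu}+\frac{\overline{b_\nu}}{R^2-\overline{b_\nu}z}\right).
\]
Taking $R=2r$ with $|z|=r$, I would bound the three pieces separately. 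The boundary integral is at most $\frac{2R}{(R-r)^2}\bigl(2T(R,g)+O(1)\bigr)$, and finite order gives $T(2r,g)=O(r^{\rho+\epsilon})$, so this term is $O(r^{\rho-1+\epsilon})$. For the reflected summands one has $|R^2-\overline{a_\mu}z|\ge R(R-r)=2r^2$, hence each is $O(1/r)$; since the number of zeros and poles in $|w|<2r$ is $n(2r,g)+n(2r,1/g)=O(r^{\rho+\epsilon})$, their total contribution is again $O(r^{\rho-1+\epsilon})$. The essential difficulty is the diagonal sum $\sum_\mu|z-a_\mu|^{-1}+\sum_\nu|z-b_\nu|^{-1}$, which has no uniform bound because $z$ may lie arbitrarily close to a zero or a pole.

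Controlling this singular sum is the main obstacle, and it is precisely where the three types of exceptional set enter. Writing $\{c_n\}$ for the combined sequence of zeros and poles, the finite-order bound $n(t)=O(t^{\rho+\epsilon})$ means that the points lying in a given annulus $|w|\asymp t$ are spread around a circle; consequently, provided $z$ is kept away from small discs $D_n=\{|w-c_n|<|c_n|^{-M}\}$, the sum is dominated by the few nearest points and totals $O(r^{\rho-1+\epsilon})$. The remaining task is to remove the directions or radii along which $z$ enters infinitely many $D_n$: projecting the $D_n$ onto the unit circle shows that the bad directions form a set $E_1$ of linear measure zero (giving part (a)), while projecting onto the radial axis, directly and after a logarithmic change of variable, produces sets $E_3$ and $E_2$ of finite linear and finite logarithmic measure (giving parts (c) and (b)). The delicate bookkeeping lies in matching the exponent $M$ and the disc radii to the density $n(t)$ so that the excluded set has the required finiteness; the coarser distance control available when only a finite-linear-measure set is removed is what forces the weaker exponent $\rho+\epsilon$ in (c), whereas the finer control in (a) and (b) retains $\rho-1+\epsilon$. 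Once the base estimate is established in each of the three forms, the telescoping of the first paragraph completes the proof.
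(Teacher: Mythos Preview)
The paper does not supply a proof of this lemma at all: it is quoted verbatim as a known result of Gundersen \cite{gundersen} and is used as a black box in the later arguments. There is therefore nothing in the paper to compare your proposal against.

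That said, your sketch is a faithful outline of Gundersen's original argument. The reduction to the first logarithmic derivative via the telescoping product, the differentiated Poisson--Jensen representation, the bound on the boundary integral and the reflected terms using $T(2r,g)=O(r^{\rho+\epsilon})$, and the handling of the singular sum $\sum|z-c_n|^{-1}$ by excising small discs around the $c_n$ whose angular or radial projections have the appropriate smallness---all of this matches the structure of the proof in \cite{gundersen}. One small caution: the step from ``$|(f')/f|\to 0$ along a sequence'' to the displayed bound in your sketch is fine, but the step ``differentiation preserves the order'' needs the finite-order hypothesis (it can fail for infinite order), which you do have here. The distinction you draw between the exponents $\rho-1+\epsilon$ in (a)/(b) and $\rho+\epsilon$ in (c), coming from the finer versus coarser control of the excluded set, is exactly the mechanism Gundersen isolates. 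If this were being written up in full you would also need to carry out the Cartan-type measure estimate on the discs $D_n$ explicitly, but as a plan your proposal is correct.
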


Recently, Pant and Saini \cite{pant2021infinite} proved the following result for an entire function. 

\begin{lemma}\cite{pant2021infinite}\label{lpantsaini}
	Suppose $f$ is a transcendental entire funtion. Then, there
	exists a set $F\subset(0,\infty)$ with finite logarithmic measure such that for all $z$ satisfying $|z| = r\in F$ and $|f(z)| = M(r, f)$ we have
	$$
	\left|\frac{f(z)}{f^{(m)}(z)}\right| \leq 2r^m,
	$$
	for all $m\in N.$
\end{lemma}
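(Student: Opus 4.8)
To prove Lemma~\ref{lpantsaini}, the plan is to run the Wiman--Valiron method. Write $f(z)=\sum_{n\ge0}a_nz^n$, let $\mu_f(r)=\max_n|a_n|r^n$ be the maximal term, and let $\nu_f(r)$ be the central index (the largest $n$ for which $|a_n|r^n=\mu_f(r)$). The only soft fact about $f$ that the argument needs is that $\nu_f(r)\to\infty$ as $r\to\infty$, which holds precisely because $f$ is transcendental (a bounded central index would force $f$ to be a polynomial); in particular $\nu_f(r)\ge 1$ once $r$ is large.

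The main step is to invoke the Wiman--Valiron estimate in its uniform form: there is a set $F\subset(1,\infty)$ of finite logarithmic measure so that, for every $r\notin F$ and every $z_r$ with $|z_r|=r$ and $|f(z_r)|=M(r,f)$,
\[
\frac{f^{(m)}(z_r)}{f(z_r)}=\Bigl(\frac{\nu_f(r)}{z_r}\Bigr)^{m}\bigl(1+o(1)\bigr)\qquad(r\to\infty,\ r\notin F),
\]
where the $o(1)$ depends only on $r$ and the estimate is valid simultaneously for all $m$. I would cite this from Hayman's survey of the Wiman--Valiron method (or from Laine's monograph, where exactly this form is used). Passing to reciprocals and moduli, and using $|z_r|=r$ together with the fact that $\nu_f(r)$ is real and positive, gives
\[
\left|\frac{f(z_r)}{f^{(m)}(z_r)}\right|=\Bigl(\frac{r}{\nu_f(r)}\Bigr)^{m}\frac{1}{\,|1+o(1)|\,}.
\]

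To conclude, choose $R_0$ large enough that for $r\ge R_0$ with $r\notin F$ one has both $|1+o(1)|>\tfrac12$ and $\nu_f(r)\ge1$; then, for all such $r$ and all $m$,
\[
\left|\frac{f(z_r)}{f^{(m)}(z_r)}\right|<2\Bigl(\frac{r}{\nu_f(r)}\Bigr)^{m}\le 2r^{m}.
\]
Enlarging $F$ to $F\cup(1,R_0]$ keeps the logarithmic measure finite and yields the set asserted in the lemma. The one point that needs care is invoking the Wiman--Valiron estimate in a version where a \emph{single} exceptional set of finite logarithmic measure serves for all derivatives $f^{(m)}$ at once; granting that, the remainder is the two-line manipulation above, with the inequality $\nu_f(r)\ge1$ absorbing arbitrarily large $m$ (for which $f^{(m)}(z_r)$ is comparatively small, so the bound only becomes easier).
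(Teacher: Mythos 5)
The paper does not actually prove this lemma (it is quoted from Pant and Saini), and your Wiman--Valiron route is the standard way such statements are obtained; for each \emph{fixed} $m$ your manipulation is fine: $f^{(m)}(z_r)=(\nu_f(r)/z_r)^m(1+o(1))f(z_r)$ for $r$ outside a set of finite logarithmic measure, $\nu_f(r)\ge 1$ since $f$ is transcendental, hence $|f(z_r)/f^{(m)}(z_r)|\le 2r^m$ for all sufficiently large $r\notin F$. (You also silently read the statement's ``$r\in F$'' as ``$r\notin F$'', which is how the lemma is in fact used later in the paper, so that correction is appropriate.)

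The genuine gap is precisely the uniformity in $m$ that you flag and then grant. The Wiman--Valiron theorem in the form you cite (Laine's monograph, or Hayman's survey) does provide one exceptional set serving all $m$, but the $o(1)$ is an asymptotic as $r\to\infty$ for each fixed $m$; it is not uniform in $m$, so you cannot pick a single $R_0$ with $|1+o(1)|>\tfrac{1}{2}$ simultaneously for all $m$. Worse, the uniform statement you would need is false, and your parenthetical remark that large $m$ ``only makes the bound easier'' is backwards: if $|f^{(m)}(z_r)|$ is small, then $|f(z_r)/f^{(m)}(z_r)|$ is large, so the bound gets harder. Concretely, take $f(z)=\cos\sqrt{z}$, whose maximum modulus on $|z|=r$ is attained at $z_r=-r$. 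For fixed $r$ one computes $|f^{(m)}(-r)|=\sum_{j\ge 0}\frac{(m+j)!}{(2m+2j)!\,j!}\,r^j\sim\frac{m!}{(2m)!}$ as $m\to\infty$, which decays super-exponentially in $m$, whereas the required lower bound $M(r,f)/(2r^m)=\cosh(\sqrt{r})/(2r^m)$ decays only exponentially; thus the asserted inequality fails at every large $r$ once $m$ is large compared with $r$, and no exceptional set in $r$ can repair a failure that occurs at every radius. So the honest conclusion of your argument (and the only tenable reading of the lemma) is: for each fixed $m$ the bound holds for all sufficiently large $r\notin F$, with the threshold depending on $m$ --- which is all the paper needs, since its proofs only invoke the case $m=1$. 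To make your write-up correct, either state it that way, or restrict $m$ to a fixed finite range, or invoke one of the refined Wiman--Valiron estimates that are uniform only for $m$ growing slowly with $\nu_f(r)$.
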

\begin{lemma}\cite{zheng}\label{lzheng}
	Suppose $f$ is a transcendental meromorphic function having atmost finite poles. If $J(f)$ has only bounded components, then for any complex number, there exists a constant $0<\beta<1$ and two sequences of positive numbers $\{r_n\}$ and $\{R_n\}$ with $r_n\to\infty$ and $R_n/r_n\to\infty(n\to\infty)$ such that
	$$M(r,f)^{\beta}\leq L(r,f)\quad\mbox{for}\quad r\in H,$$
	where $H=\cup_{n=1}^\infty\{r:r_n<r<R_n\}.$  
\end{lemma}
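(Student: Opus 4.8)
The plan is to convert the dynamical hypothesis into a statement about wide round annuli lying in the Fatou set, and then to extract the modulus comparison from Harnack's inequality applied to a positive harmonic function. First I would invoke the structure theory of multiply connected Fatou components. For a transcendental meromorphic $f$ with at most finitely many poles whose Julia set $J(f)$ has only bounded components, the theory of Baker and of Bergweiler, Rippon and Stallard on multiply connected wandering domains supplies a sequence of multiply connected Fatou components that escape to infinity. From it I would produce round annuli $A_n=\{z:r_n<|z|<R_n\}\subset F(f)$, each surrounding the origin, with $r_n\to\infty$ and $R_n/r_n\to\infty$, and such that $f(A_n)$ lies far from the origin; consequently $|f(z)|\to\infty$ uniformly on $A_n$, so in particular $|f|>1$ on $A_n$ for all large $n$.

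Next, on each annulus $A_n$ I would work with $u=\log|f|$. Since $f$ has only finitely many poles, for $n$ large all of them lie in $\{|z|\le r_n\}$, so $f$ is holomorphic on $A_n$; and since $|f|>1$ there, $f$ is zero-free, so $u$ is a \emph{positive} harmonic function on $A_n$. The target inequality $M(r,f)^{\beta}\le L(r,f)$ is, after taking logarithms, exactly $\beta\max_{|z|=r}u\le\min_{|z|=r}u$, i.e.\ a multiplicative bound on the oscillation of $u$ on a circle lying well inside the annulus. For a circle $|z|=r$ with $2r_n\le r\le R_n/2$, a Harnack chain of boundedly many disks encircling this circle, each of radius comparable to $r$ and therefore at bounded relative distance from $\partial A_n$, gives $\max_{|z|=r}u\le C\,\min_{|z|=r}u$ for a constant $C$ that depends only on the geometry and not on $n$ or on $u$. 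Setting $\beta=1/C\in(0,1)$ turns this into $\min_{|z|=r}u\ge\beta\max_{|z|=r}u$, that is $L(r,f)\ge M(r,f)^{\beta}$.

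Finally I would take $H=\bigcup_{n}\{r:2r_n<r<R_n/2\}$; relabelling $2r_n$ and $R_n/2$ as the inner and outer radii, the ratio of outer to inner still tends to infinity, so $H$ is of the form demanded in the statement, and the Harnack estimate yields $M(r,f)^{\beta}\le L(r,f)$ for all $r\in H$. I expect the first step to be the main obstacle: one must extract from ``$J(f)$ has only bounded components'' not only the wide round annuli $A_n\subset F(f)$ surrounding the origin, but also the uniform lower bound $|f|>1$ on them, so that $\log|f|$ is genuinely positive and harmonic. This positivity is indispensable, since a single zero of $f$ on such a circle would force $L(r,f)=0$ and destroy the inequality; by contrast, discarding the finitely many poles and running the Harnack comparison are routine.
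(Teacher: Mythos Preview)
The paper does not prove this lemma; it is quoted verbatim from Zheng \cite{zheng} and used as a black box in the proofs of Theorems \ref{mainth2} and \ref{mainth1}. So there is no ``paper's own proof'' to compare against.

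Your outline is essentially the argument Zheng gives. The two substantive ingredients are exactly the ones you isolate: (i) from the hypothesis that $J(f)$ has only bounded components one extracts, via the Baker/Zheng structure theory of multiply connected wandering domains, round annuli $A_n=\{r_n<|z|<R_n\}\subset F(f)$ with $r_n\to\infty$, $R_n/r_n\to\infty$, and $f(A_n)\subset U_{n+1}$ where $\operatorname{dist}(0,U_{n+1})\to\infty$, hence $|f|>1$ on $A_n$ for large $n$; and (ii) a Harnack chain around the circle $|z|=r$ for $2r_n\le r\le R_n/2$ applied to the positive harmonic function $\log|f|$, giving $\max_{|z|=r}\log|f|\le C\min_{|z|=r}\log|f|$ with $C$ depending only on the relative geometry, not on $n$. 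Your identification of step (i), specifically the positivity $|f|>1$, as the crux is accurate: once that is in hand, the Harnack step is routine, and the relabelling $r_n\mapsto 2r_n$, $R_n\mapsto R_n/2$ is harmless.

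Two small remarks. First, the dangling phrase ``for any complex number'' in the lemma is a vestige of Zheng's formulation, which allows a shift: for any $a\in\mathbb{C}$ one gets $M(r,f)^\beta\le L(r,f-a)$. Your argument adapts immediately, since $|f|\to\infty$ on $A_n$ forces $|f-a|>1$ there as well and $\log|f-a|$ is equally harmonic. Second, you should be explicit that the finitely many poles are what allow you to treat $f$ as entire on $A_n$ for large $n$; you mention this, but it is worth stating that for meromorphic $f$ with infinitely many poles the conclusion can fail.
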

\begin{proof}[\underline{Proof of Theorem \rm{\ref{mainth2}}}] 
	We prove this Theorem by contradiction. Suppose $f$ is a finite order non-trivial solution of equation \eqref{2order}.
		Applying Lemma \ref{lgundersen}, there is a set $E\subset(1,\infty)$ with finite logarithmic measure such that
		\begin{equation}\label{guneqch4}
			\left| \frac{f^{''}(z)}{f^{'}(z)}\right| \leq |z|^{2\rho(f)},	
		\end{equation}
		holds for all $z$ satisfying $|z|\notin E\cup [0,1]$.\\
		Suppose that $z_{r}=re^{\iota\theta_{r}}$ be the points such that $|f(z_{r})|=M(r,f)$. Then, applying Lemma \ref{lpantsaini}, there exists a set $F\subset(0, \infty)$ with
		$m_l(F)<\infty$ such that 
		\begin{equation}\label{impeqch4}
			\frac{f(re^{\iota\theta_{r}})}{f^{m}(re^{\iota\theta_{r}})}\leq 2r^{m},
		\end{equation}
		holds for all sufficiently large $r\notin F$ and for all $m\in \mathbb{N}$.
		Applying Lemma \ref{lzheng}, we have 
		\begin{equation}\label{fatoueqch4}
			M(r,A)^{\gamma}\leq |A(re^{\iota\theta})|,
		\end{equation}
		for $0<\gamma<1$ and $r\in F_{1}=\cup_{n=1}^\infty\{r:r_n<r<R_n\}$.
		Let $\rho(B)<\beta<\rho(A)$, then the
		definition of order of growth of $B(z)$ implies that
		\begin{equation}\label{ordereqch4}
			|B(re^{\iota\theta})|\leq \exp r^{\beta},
		\end{equation}
		for all sufficiently large $r$.  
		From equations \eqref{2order}, \eqref{guneqch4}, \eqref{impeqch4}, \eqref{fatoueqch4} and \eqref{ordereqch4}, there exists a sequence $z=re^{\iota\theta}$ such that for all $r\in F_{1}\setminus (F\cup E\cup[0,1])$, we have
		\begin{align*}
			|A(re^{\iota\theta})|&\leq \left|\frac{f^{''}(re^{\iota\theta})}{f^{'}(re^{\iota\theta})}\right|+|B(re^{\iota\theta})|\left|\frac{f(re^{\iota\theta})}{f^{'}(re^{\iota\theta})}\right|\\
			\implies M(r, A)^{\gamma}&\leq r^{2\rho(f)}+2r\exp r^{\beta}\\
			&\leq 2r\exp r^{\beta}(1+o(1)).
		\end{align*}
		This gives $\rho(A)\leq \beta$, which is a contradiction. Hence, every non-trivial solution of equation \eqref{2order} is of infinite order.
	\end{proof}
In 2017, Gundersen\cite{gundersen2017research} asked a question, ``Does every non-trivial solution $f$ of equation \eqref{2order} is of infinite order, when $A(z)$ satisfies $\lambda(A)<\rho(A)$ and $B(z)$ is a non-constant polynomial?'' Long, et al.\cite{2018queslongtion} partially answered the question.
\begin{theorem}\cite{2018queslongtion}\label{2018queslongtion}  Let $A(z)=h(z)e^{P(z)}$ satisfy $\lambda(A)< \rho (A)$ and $B(z)=b_mz^m+b_{m-1}z^{m-1}+\cdots + b_0$ is a polynomial of degree $m$ such that:
	\begin{enumerate}[(a)]
		\item $m + 2 < 2n$, or
		\item $m + 2 > 2n$ and $m + 2 \neq 2kn$ for all integers $k$, or
		\item $m + 2 = 2n$ and $\frac{a_n^2}{b_m}$ is not real and negative.
	\end{enumerate}
	Then, all non-trivial solution of equation \eqref{2order} have infinite order.
\end{theorem}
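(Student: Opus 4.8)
The plan is to argue by contradiction along the lines of the proof of Theorem~\ref{B(z)polynomial}. Suppose $f\not\equiv 0$ solves \eqref{2order} with $\rho(f)<\infty$. Write $\rho(A)=n=\deg P$, $P(z)=a_nz^n+\cdots$, $\delta(P,\theta)=\operatorname{Re}(a_ne^{\iota n\theta})$, and apply the substitution $f=y\exp\{-\frac{1}{2}\int_0^z h(t)e^{P(t)}\,dt\}$, which removes the first--order term and turns \eqref{2order} into $y''+\bigl(B-\frac{1}{4}A^2-\frac{1}{2}A'\bigr)y=0$. The $2n$ rays $\arg z=\theta_k$ with $\delta(P,\theta_k)=0$ split the plane into $n$ sectors $\omega^+$, on which $\delta(P,\cdot)>0$ and hence $|A|$ grows faster than every power of $r$, and $n$ sectors $\omega^-$, on which $\delta(P,\cdot)<0$ and hence $A,A'\to 0$, each sector of opening $\pi/n$; alongside these we keep track of the $m+2$ equally spaced critical rays of $B$, that is, the $\phi$ with $b_me^{\iota(m+2)\phi}>0$. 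The hypothesis $\lambda(A)<\rho(A)$ is precisely what makes Lemma~\ref{lcritical} applicable.

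The next step reproduces, ray by ray, the dichotomy in the proof of Theorem~\ref{B(z)polynomial}. On a ray inside $\omega^-$, Lemma~\ref{lcritical} gives $|B-\frac{1}{4}A^2-\frac{1}{2}A'|=O(r^m)$, so Lemma~\ref{y''+ A(z)y = 0} yields $\log^+|y|=O(r^{(m+2)/2})$, and since the transformation factor tends to $1$ there (the estimate \eqref{to1}) also $\log^+|f|=O(r^{(m+2)/2})$; moreover, along any critical ray of $B$ lying in $\omega^-$, Lemma~\ref{Q(z)} produces a path on which $y$, hence $f$, tends to $0$. On a ray inside $\omega^+$, setting $V=f'/f$ and combining Lemma~\ref{f'(z)f(z)} with the super--polynomial size of $|A|$ gives $V=o(r^{-2})$, so $f$ tends to a finite nonzero limit, and by Phragm\'en-Lindel\"{o}f $f$ tends to a nonzero constant on each truncated sector $\omega^+_\epsilon$; the exceptional rays $\theta_k$ are treated by descending through the lower--degree parts $p_{n-1},p_{n-2},\dots$ of $P$, each time putting that ray into one of the two regimes above. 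Exactly as in the derivation of \eqref{orderf<m+22} this gives
\[
 \rho(f)\ \le\ \frac{m+2}{2}.
\]

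The three hypotheses enter only at the last step, through the Phragm\'en-Lindel\"{o}f statement of Remark~\ref{rem}: if $f$ is analytic of finite order in an angle of opening $\theta_0$ and tends to $0$ along one bounding curve and to $c\neq 0$ along the other, then $f$ is unbounded there, and a standard Phragm\'en-Lindel\"{o}f estimate forces $\rho(f)\ge\pi/\theta_0$. Suppose first $m+2<2n$, so that $(m+2)/2<n$. Using the spacing $2\pi/(m+2)>\pi/n$ of the critical rays of $B$ against the rays $\theta_k$ — and examining also the degenerate rays, where a critical ray of $B$ meets $\{\delta(P,\cdot)=0\}$, via the descent used above — one obtains a curve on which $f\to 0$ lying in an angle of opening $<\pi/n$ whose other side is a ray of a neighbouring $\omega^+_\epsilon$; Remark~\ref{rem} then gives $\rho(f)>n$, contradicting $\rho(f)\le(m+2)/2<n$. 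Suppose next $m+2>2n$; if some $\theta_k$ were not a critical ray of $B$, the nearest critical ray of $B$ would lie in an adjacent $\omega^-$ at distance $<2\pi/(m+2)$, and the same thin--angle argument would yield $\rho(f)>(m+2)/2$; hence all $2n$ rays $\theta_k$ are critical rays of $B$, which forces $\pi/n$ to be an integer multiple of $2\pi/(m+2)$, i.e. $m+2=2kn$, contrary to (b). Suppose finally $m+2=2n$; a short computation with $\delta(P,\phi)=0$ shows that a critical ray $\phi$ of $B$ coincides with some $\theta_k$ exactly when $a_n^2/b_m$ is real and negative, which (c) rules out, so the $m+2=2n$ critical rays of $B$ interlace the $2n$ rays $\theta_k$ and each $\omega^-$ carries one in its interior; the case--(a) argument now applies with $(m+2)/2=n$ and again gives $\rho(f)>n=(m+2)/2\ge\rho(f)$.

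The main obstacle I anticipate is the Phragm\'en-Lindel\"{o}f bookkeeping rather than any single estimate: making the ray--wise growth bounds uniform up to the sector boundaries, controlling $f$ on the degenerate rays $\theta_k$ through the descent on $p_{n-1},p_{n-2},\dots$, and, in case (a), verifying that every admissible placement of the critical rays of $B$ really does yield an angle of the kind required by Remark~\ref{rem}. All of this runs in close parallel with the corresponding steps in the proof of Theorem~\ref{B(z)polynomial}, so I would lift that argument essentially verbatim up to \eqref{orderf<m+22} and only then branch into the three cases.
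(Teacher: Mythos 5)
First, note that the paper does not actually prove Theorem \ref{2018queslongtion}: it is quoted from Long et al.\ \cite{2018queslongtion} as motivation, so the only in-paper benchmark is the proof of Theorem \ref{B(z)polynomial}, which you correctly lift up to the bound $\rho(f)\leq \frac{m+2}{2}$. Within that framework your case (b) and case (c) arguments are essentially the standard ones and look sound: in (b) the rays $\delta(P,\theta_k)=0$ are forced to be critical rays of $B$, giving $\pi/n=k\cdot\frac{2\pi}{m+2}$, i.e.\ $m+2=2kn$; and in (c) your computation that a critical ray of $B$ meets a ray with $\delta(P,\theta)=0$ exactly when $a_n^2/b_m$ is negative real is correct, so the interlacing puts a critical ray of $B$ strictly inside each $\omega^-$ and the thin-angle Phragm\'en--Lindel\"of argument applies.

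The genuine gap is in case (a). When $m+2<2n$ the critical rays of $B$ are spaced $\frac{2\pi}{m+2}>\frac{\pi}{n}$ apart, which is wider than the opening of each sector $\omega^-$; there is therefore no guarantee that any critical ray of $B$ lies in any $\omega^-$ at all (for instance when $m+2\leq n$ some sectors $\omega^-$ certainly contain none), so the ``curve on which $f\to 0$'' that your argument requires, produced via Lemma \ref{Q(z)}, need not exist, and Remark \ref{rem} has nothing to act on. The case does not need this machinery anyway: once you have $\rho(f)\leq\frac{m+2}{2}<n=\rho(A)$, equation \eqref{2order} itself gives the contradiction, since $B\not\equiv 0$ forces $f'\not\equiv 0$, and writing $A=-\bigl(f''+Bf\bigr)/f'$ yields $n=\rho(A)\leq\rho(f)<n$. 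With that replacement (and with the usual care in the Phragm\'en--Lindel\"of step, where the angle must be kept strictly below $\frac{2\pi}{m+2}$ before letting $\epsilon\to 0$), your outline matches the known proof strategy; as written, however, case (a) fails.
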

 Kumar, et al.\cite{kumarsaini} motivated by their result, considered $\rho(A)>n$ and $B(z)$ to be a polynomial in Theorem \ref{Manisha}. 
\begin{theorem}\cite{kumarsaini}\label{Manisha}
	Consider a transcendental entire function $A(z)=h(z)e^{P(z)}$, where $P(z)$ is a non-constant polynomial of degree $n$ and $\rho(h) > n$. Assume that $h(z)$ is bounded away from zero and exponentially blows up in $E^+$ and $E^-$ respectively and let $B(z)$ be a polynomial.
	Then, all non-trivial solutions of the equation (\ref{2order}) are of infinite order.
	\end{theorem}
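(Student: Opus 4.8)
The plan is to argue by contradiction along the lines of the proof of Theorem \ref{B(z)polynomial}, the new ingredient being that $\rho(h)>n$ forces $|A(z)|$ to be large on \emph{every} ray lying strictly inside the sectors determined by $P$: on the sectors where $e^{P(z)}$ blows up this is immediate from $|h|$ being bounded away from zero, while on the sectors where $e^{P(z)}$ decays it is precisely the exponential blow-up of $h$ that wins, because $\rho(h)>n=\deg P$. So suppose $f\not\equiv0$ is a solution of \eqref{2order} with $\rho(f)<\infty$ and set $V=f'/f$. Then $V$ is meromorphic with $\rho(V)\le\rho(f)<\infty$ and satisfies the Riccati equation
\[
V'+V^{2}+A(z)V+B(z)=0 .
\]
By Lemma \ref{f'(z)f(z)} there are a constant $N>0$ and an $R$-set $U$ with $|V'(z)|+|V(z)|^{2}=O(|z|^{N})$ for $z\notin U$.

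The decisive step is the lower bound for $A$. Write $P(z)=(\alpha+\iota\beta)z^{n}+\cdots$ with $\alpha,\beta$ real, put $\delta(P,\theta)=\alpha\cos n\theta-\beta\sin n\theta$, and let $\theta_{k}=\bigl(\arctan(\alpha/\beta)+k\pi\bigr)/n$, $k=0,\dots,2n-1$, be the rays on which $\delta(P,\theta)=0$; they partition the plane into the $2n$ equal sectors on which $\delta(P,\theta)>0$ (making up $E^{+}$) or $\delta(P,\theta)<0$ (making up $E^{-}$). If $\arg z=\phi$ lies strictly inside $E^{+}$ then $|e^{P(z)}|=\exp\!\bigl(\delta(P,\phi)r^{n}+O(r^{n-1})\bigr)\ge\exp\!\bigl(\tfrac12\delta(P,\phi)r^{n}\bigr)$ for large $r$ and $|h(z)|$ is bounded below by a positive constant, so $|A(re^{\iota\phi})|\ge\exp\!\bigl(\tfrac14\delta(P,\phi)r^{n}\bigr)$ for large $r$. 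If $\arg z=\phi$ lies strictly inside $E^{-}$ then $|e^{P(z)}|\ge\exp\!\bigl(2\delta(P,\phi)r^{n}\bigr)$ for large $r$, while the exponential blow-up of $h$ in $E^{-}$ gives $\log|h(z)|\ge c|z|^{\mu}$ for some $\mu$ with $n<\mu\le\rho(h)$, hence
\[
|A(re^{\iota\phi})|=|h(re^{\iota\phi})|\,|e^{P(re^{\iota\phi})}|\ge\exp\!\bigl(cr^{\mu}+2\delta(P,\phi)r^{n}\bigr)\ge\exp\!\bigl(\tfrac{c}{2}r^{\mu}\bigr)
\]
for large $r$, since $\mu>n$. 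Thus for every $\phi$ interior to one of the $2n$ sectors there is $c_{\phi}>0$ with $|A(re^{\iota\phi})|\ge\exp(c_{\phi}r^{n})$ for all large $r$.

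Now fix any $\phi$ that lies strictly inside one of the sectors and for which the ray $\arg z=\phi$ meets only finitely many of the discs constituting $U$; the complement of this set of $\phi$ in $[0,2\pi)$ has linear measure zero. On such a ray choose $r_{0}$ so large that $re^{\iota\phi}\notin U$ for $r>r_{0}$; then, using the Riccati equation, Lemma \ref{f'(z)f(z)}, the lower bound for $A$, and $|B(z)|=O(|z|^{m})$ (as $B$ is a polynomial),
\[
|V(re^{\iota\phi})|=\left|\frac{V'(re^{\iota\phi})+V(re^{\iota\phi})^{2}+B(re^{\iota\phi})}{A(re^{\iota\phi})}\right|\le\frac{O(r^{N})+O(r^{m})}{\exp(c_{\phi}r^{n})}=o(r^{-2})
\]
as $r\to\infty$. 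In particular $V$ has no pole on the ray beyond $r_{0}$, so $f$ has no zero there, $\int_{r_{0}}^{\infty}|V(te^{\iota\phi})|\,dt<\infty$, and writing $f(re^{\iota\phi})=f(r_{0}e^{\iota\phi})\exp\!\bigl(\int_{r_{0}}^{r}V(te^{\iota\phi})e^{\iota\phi}\,dt\bigr)$ shows that $f(re^{\iota\phi})$ tends to a finite, nonzero limit as $r\to\infty$. Hence $f$ is bounded on each ray of a family whose complement has measure zero; picking finitely many such rays with consecutive angular gaps less than $\pi/(\rho(f)+1)$ and applying the Phragm\'en--Lindel\"of principle on each resulting sector (cf.\ Remark \ref{rem}) gives that $f$ is bounded on $\mathbb{C}$, hence constant by Liouville's theorem. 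But if $f\equiv c$ then \eqref{2order} yields $cB(z)\equiv0$, and since $B\not\equiv0$ we get $f\equiv0$, a contradiction. Therefore every non-trivial solution of \eqref{2order} has infinite order.

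The step I expect to be the real obstacle is the honest verification of $|A(re^{\iota\phi})|\ge\exp(c_{\phi}r^{n})$ on rays interior to $E^{-}$: it must be drawn out of the precise formulation of ``$h$ is bounded away from zero in $E^{+}$ and exponentially blows up in $E^{-}$'' used in \cite{kumarsaini}, and one has to check that the growth of $h$ there --- available exactly because $\rho(h)>n$ --- dominates the decay of $e^{P(z)}$ \emph{uniformly} on such rays (and to deal similarly with the finitely many critical rays $\theta_{k}$ lying between $E^{+}$ and $E^{-}$). Everything after that --- the $R$-set bookkeeping and the Phragm\'en--Lindel\"of/Liouville endgame --- is routine and parallels the proof of Theorem \ref{B(z)polynomial}.
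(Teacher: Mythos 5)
The paper never proves Theorem \ref{Manisha}; it is quoted from \cite{kumarsaini}, and the closest in-paper analogue is the proof of Theorem \ref{th2}, which the authors say is modelled on the cited proof. Measured against that, your argument is essentially sound but takes a genuinely different route. The paper's style of argument works along the maximum-modulus directions $\theta_r$ of $A$: it combines the Wang--Laine estimate (Lemma \ref{fromnonhomo}), giving $|A(re^{\iota\theta})|\geq e^{-5\pi}M(r,A)^{1-C}$ near $\theta_r$, with Lemma \ref{aa} (so that $|B|=o(M(r,A))$, automatic here since $B$ is a polynomial), Gundersen's pointwise estimates (Lemma \ref{lgundersen}), Lemma \ref{LemmaManisha}, and Lemma \ref{LemmaGundersen}, together with a case analysis on the sign of $\delta(P,\theta_0)$. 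You instead argue ray-by-ray via the Riccati equation and the $R$-set estimate, exactly as in the proof of Theorem \ref{B(z)polynomial}, and close with Phragm\'en--Lindel\"of plus Liouville; this buys a cleaner endgame ($f$ bounded on almost every ray, hence constant, hence $B\equiv 0$, a contradiction) and dispenses with the maximum-modulus machinery entirely, and your worry about the critical rays $\theta_k$ is immaterial since they form a null set of directions. The price is the stronger pointwise lower bound $|A(re^{\iota\phi})|\geq\exp(c_\phi r^{n})$ strictly inside $E^-$, which you correctly flag as the crux: note that Lemma \ref{LemmaManisha}, as reproduced in this paper, yields in $E^-$ only the \emph{decaying} lower bound $\exp((1-\epsilon)\delta(P,\theta)r^{n})$ with $\delta(P,\theta)<0$, so your bound cannot be extracted from that lemma and must be drawn directly from the hypothesis, read as $\log|h(re^{\iota\theta})|\geq c\,r^{\mu}$ for some $\mu>n$ along each ray of $E^-$ (the reading used in \cite{kks,kumarsaini}, where the blow-up rate is tied to $\rho(h)>n$). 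Under that reading your proof is correct as written; under a weaker reading of ``exponentially blows up'' the $E^-$ step collapses and one is forced back to the maximum-modulus-direction argument of the cited proof.
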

	
We are motivated by Theorem \ref{Manisha} and replace the condition of $B(z)$ to satisfy some conditions given in Theorem \ref{th2}.

\begin{thm}\label{th2} Let $A(z)$ satisfy the conditions of Theorem \ref{Manisha} and $B(z)$ be a transcendental entire function satisfying
	\begin{enumerate}
		\item[(i)] $\rho(B)<\rho(A)$ or
		\item[(ii)]$\mu(B)<\rho(A)$
	\end{enumerate}
	Then, all non-trivial solutions of the equation \eqref{2order} are of infinite order.
\end{thm}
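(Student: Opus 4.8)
The plan is to argue by contradiction, following the template of the proofs of Theorems \ref{B(z)polynomial} and \ref{Manisha}. Suppose \eqref{2order} has a nontrivial solution $f$ with $\rho(f)<\infty$, write $A(z)=h(z)e^{P(z)}$ with $\deg P=n$ and $\rho(h)>n$, and (since $\rho(A)=\rho(h)>n$ and $\rho(A)$ exceeds $\rho(B)$ in case (i), resp. $\mu(B)$ in case (ii)) fix reals $\tau,\epsilon$ with
$$n<\tau<\rho(A),\qquad \rho(B)+\epsilon<\tau\ \text{ in case (i)},\qquad \mu(B)+\epsilon<\tau\ \text{ in case (ii)}.$$
Both hypotheses then reduce to the single estimate $|B(z)|\le M(|z|,B)\le\exp(|z|^{\tau})$: in case (i) this holds for all large $|z|$, and in case (ii) it holds along some sequence $|z|=r_k\to\infty$ by the definition of the lower order $\mu(B)$. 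In either case the set of admissible radii is unbounded, which is all that is needed below.

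The decisive step is to show that $f$ tends to a constant on the sector $E^{-}$ on which $h$ exponentially blows up. Rewrite \eqref{2order} as $A(z)\dfrac{f'(z)}{f(z)}=-B(z)-\dfrac{f''(z)}{f(z)}$. On a path $\Gamma$ asymptotic to a ray $\arg z=\theta^{*}$ interior to $E^{-}$ and avoiding the zero-measure exceptional set of Lemma \ref{lgundersen}(a), one has $|f''(z)/f(z)|=O(|z|^{2\rho(f)})$, while the hypotheses of Theorem \ref{Manisha} force $|h(z)|\ge\exp(c|z|^{\tau})$; since $|e^{P(z)}|\ge\exp(-C|z|^{n})$ trivially and $\tau>n$, this gives $|A(z)|\ge\exp(\tfrac12 c|z|^{\tau})$ on $\Gamma$. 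Combining with the estimate for $B$ yields $|f'(z)/f(z)|\le\exp(-\tfrac13 c|z|^{\tau})$ for $z\in\Gamma$ large (in case (ii), along the radii $r_k$), so $f'/f$ is integrable along $\Gamma$ and $f$ tends there to a finite nonzero limit. Because $f$ has finite order, the Phragm\'en-Lindel\"{o}f principle applied on thin subsectors (as in Remark \ref{rem}) then forces these limits to agree and $f$ to converge uniformly to a single constant on all of $E^{-}$.

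To close the argument one controls $f$ on the remaining sectors $E^{+}$ and near the finitely many critical rays $\delta(P,\theta)=0$: using Lemma \ref{lpantsaini} at the maximum-modulus points of $f$ together with Lemma \ref{f'(z)f(z)} — or, after the Bank-Laine substitution $f=y\exp(-\tfrac12\int_0^z A)$ of the proof of Theorem \ref{B(z)polynomial}, Lemma \ref{y''+ A(z)y = 0} — one obtains that along such rays $f$ is again asymptotically constant or satisfies a growth bound of the type $\log^{+}|f(re^{\iota\theta})|=O(r^{(m+2)/2})$. Patching the sectorial estimates across the critical rays by Phragm\'en-Lindel\"{o}f produces a finite bound $\rho(f)\le\frac{m+2}{2}$ (or an analogous explicit bound), and then the critical-ray counting argument from the proof of Theorem \ref{B(z)polynomial} — an unbounded region of small angular measure bounded by a curve on which $f$ tends to $0$ (or to the constant from $E^{-}$) and a ray on which $f$ grows — combined with $\rho(h)>n$ yields the incompatibility that proves $\rho(f)=\infty$.

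I expect the main obstacle to be this last patching, in particular across $E^{+}$ and the critical rays when $n\le\rho(B)<\rho(A)$: there $e^{P}$ has order at most $n-1$, so $A$ need not dominate $B$, and the sectorial information on $f$ — convergence to a constant on $E^{-}$, controlled growth elsewhere — must be glued together by a careful Phragm\'en-Lindel\"{o}f argument with the angular openings tracked precisely enough that the counting step applies. A secondary technical point is extracting the clean quantitative bound $|A(z)|\ge\exp(c|z|^{\tau})$ on $E^{-}$ from the qualitative hypothesis that $h$ exponentially blows up, and checking that $\tau$ and $\epsilon$ can be chosen compatibly in both cases $\rho(B)<\rho(A)$ and $\mu(B)<\rho(A)$.
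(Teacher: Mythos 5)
Your plan does not close, and the point at which it stops is exactly the missing idea. The machinery you want to import from the proof of Theorem \ref{B(z)polynomial} (Bank--Laine substitution, sectorial estimates, Phragm\'en--Lindel\"{o}f gluing, then counting critical rays) is specific to a \emph{polynomial} $B(z)=Q(z)$ of degree $m$: the critical rays are those of $Q$, the bound $\log^{+}|f(re^{\iota\theta})|=O(r^{(m+2)/2})$ and hence $\rho(f)\le\frac{m+2}{2}$ use $\deg Q=m$, and the final contradiction is the arithmetic clash between $m+2=kn$ and $n\nmid m+2$. For a transcendental $B$ with only $\rho(B)<\rho(A)$ or $\mu(B)<\rho(A)$ there are no critical rays of $B$, no analogue of the exponent $\frac{m+2}{2}$, and no divisibility hypothesis to contradict; your sentence that the counting argument ``yields the incompatibility'' is therefore an unproved claim, and your own last paragraph concedes that the patching across $E^{+}$ is unresolved. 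Two earlier steps are also unjustified: in case (ii) the estimate $|B(z)|\le\exp(|z|^{\tau})$ holds only along a sequence of radii $r_k$, which is not enough to make $f'/f$ integrable along an entire path $\Gamma$ and conclude that $f$ has a limit there (one needs the bound on all large radii on $\Gamma$, or a positive-density set of radii used as in Remark \ref{remch4}); and the lower bound $|A(z)|\ge\exp(c|z|^{\tau})$ with $\tau>n$ on $E^{-}$ is stronger than what the hypotheses supply --- Lemma \ref{LemmaManisha} only gives $|A(re^{\iota\theta})|\ge\exp((1-\epsilon)\delta(P,\theta)r^{n})$, which on $E^{-}$ (where $\delta(P,\theta)<0$) is a decaying bound, not an exponentially growing one.

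For comparison, the paper's proof avoids all of this sectorial bookkeeping. It works at (a limit $\theta_0$ of) the maximum-modulus directions $\theta_r$ of $A$: Lemma \ref{fromnonhomo} gives $|A(re^{\iota\theta})|\ge e^{-5\pi}M(r,A)^{1-C}$ for $|\theta-\theta_r|\le l_0$ on a set $S_1$ of radii of lower logarithmic density $\ge 1-\zeta$; Lemma \ref{aa} (and Remark \ref{remch4} for case (ii), which is how the hypothesis $\mu(B)<\rho(A)$ is actually exploited) gives $|B(z)|=o(M(|z|,A))$ on a set $S_2$ of upper logarithmic density $1$; Lemma \ref{lgundersen} bounds $f''/f$ polynomially; Lemma \ref{LemmaManisha} handles the cases $\delta(P,\theta_0)>0$, $<0$, $=0$. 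Dividing equation \eqref{2order} by $f$ and by $A$ then forces $|f'/f|=O(r^{-2})$ along the direction $\theta_0$, so by Lemma \ref{LemmaGundersen} $f$ tends to a finite nonzero limit there, which contradicts the choice of the maximum-modulus directions. If you want to salvage your approach you would have to supply a genuinely new gluing/counting argument valid for transcendental $B$; as written, the proposal has a gap precisely where the proof has to depart from Theorem \ref{B(z)polynomial}.
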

The following Lemma yield us a lower bound for modulus of an entire function in the neighbourhood of $\theta$, where $\theta\in [0, 2\pi).$
\begin{lemma}\cite{wanglaine}\label{fromnonhomo}
	Suppose $f(z)$ is an entire function of finite order $\rho$ and $M(r,f)=|f(re^{\iota\theta_r})|$ for every $r$. Given $\zeta >0$ and $0<C(\rho, \zeta )<1,$ there exists $0<l_0<\frac{1}{2}$ and a set $S \subset (1,\infty)$ with $\underline{\log dens}(S) \geq 1 - \zeta$ such that
	$$e^{-5\pi} M(r, f)^{1-C} \leq |f(re^{\iota \theta})|,$$ 
	for all sufficiently large $r \in S$ and for all $\theta$ satisfying $|\theta - \theta_r| \leq l_0$. \end{lemma}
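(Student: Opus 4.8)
The plan is to combine Wiman--Valiron theory with a density argument that isolates a large set of radii on which the central index of $f$ is comparable to $\log M(r,f)$. The polynomial case is immediate, since then $|f(re^{\iota\theta})|\sim|a_d|r^d\sim M(r,f)$ for large $r$ and all $\theta$, so I assume $f$ transcendental. Write $f(z)=\sum a_nz^n$, let $\mu(r,f)$ be the maximum term and $\nu(r,f)$ the central index, and recall $\log\mu(r,f)\le\log M(r,f)$ together with $\log\mu(r,f)=\log\mu(r_0,f)+\int_{r_0}^r\frac{\nu(t,f)}{t}\,dt$. The guiding heuristic is that near the maximising direction $|f|$ decays quadratically, $\log|f(re^{\iota\theta})|\approx\log M(r,f)-\tfrac12\nu(r,f)(\theta-\theta_r)^2$, as one checks exactly for $f(z)=e^z$ (there $\nu(r,f)=r$, $\theta_r=0$, and $\log|f|=r\cos\theta$). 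The strategy is to establish a lower-bound version of this quadratic decay, valid across the fixed arc, and then to trade the quadratic loss against $C\log M(r,f)$ by controlling $\nu(r,f)$.

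The next step is to control $\nu(r,f)$ in terms of $\log M(r,f)$ on a set of large lower logarithmic density. Set $E_K=\{r>1:\nu(r,f)>K\log\mu(r,f)\}$. Since $\frac{d}{d\log r}\log\mu(r,f)=\nu(r,f)$, on $E_K$ we have $\frac{d}{d\log r}\log\log\mu(r,f)=\frac{\nu(r,f)}{\log\mu(r,f)}>K$. Integrating over $[r_0,R]$ and using that $f$ has finite order $\rho$, whence $\log\log\mu(R,f)\le(\rho+\epsilon)\log R$ for large $R$, yields $K\,m_l(E_K\cap[r_0,R])\le(\rho+\epsilon)\log R+O(1)$. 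Hence the upper logarithmic density of $E_K$ is at most $(\rho+\epsilon)/K$, which can be made $\le\zeta$ by choosing $K=K(\rho,\zeta)$ large enough. Taking $S=(1,\infty)\setminus(E_K\cup E_0)$, where $E_0$ is the Wiman--Valiron exceptional set of finite logarithmic measure, gives a set with $\underline{\log dens}(S)\ge 1-\zeta$ on which simultaneously $\nu(r,f)\le K\log M(r,f)$ and the local representation of $f$ about $z_r$ is available.

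I would then combine the two ingredients. For $r\in S$ and $|\theta-\theta_r|\le l_0$, the quadratic lower bound together with $\nu(r,f)\le K\log M(r,f)$ gives $\log|f(re^{\iota\theta})|\ge\log M(r,f)-\tfrac12 K\log M(r,f)\,l_0^2(1+o(1))$. Choosing $l_0=l_0(\rho,\zeta,C)<\tfrac12$ so small that $\tfrac12 Kl_0^2\le C$, the right-hand side is at least $(1-C)\log M(r,f)$ up to a bounded additive error, which I absorb into the fixed constant $e^{-5\pi}$; this produces $e^{-5\pi}M(r,f)^{1-C}\le|f(re^{\iota\theta})|$ for all sufficiently large $r\in S$ and all $\theta$ with $|\theta-\theta_r|\le l_0$. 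The example $f(z)=e^z$ confirms the calibration: there $K=1$, $l_0\approx\sqrt{2C}$ works since $\cos l_0>1-C$.

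The main obstacle is the quadratic lower bound itself across the \emph{fixed} arc. The classical Wiman--Valiron estimate is guaranteed only in a disc about $z_r$ of angular width shrinking like $\nu(r,f)^{-1/2}$, whereas $l_0$ is a fixed constant and the arc $|\theta-\theta_r|\le l_0$ is comparatively huge. To make the bound valid over the whole arc I would analyse the power series directly: near $n=\nu(r,f)$ the coefficients $|a_n|r^n$ form a sharply peaked, essentially Gaussian profile of width $\sim\nu(r,f)^{1/2}$, and rotating by $\theta-\theta_r$ dephases the terms by $(n-\nu(r,f))(\theta-\theta_r)$, so one must show the surviving coherent part still dominates both the dephased remainder and the tail, uniformly for $|\theta-\theta_r|\le l_0$, while tracking the error honestly into $e^{-5\pi}$. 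The Gundersen-type logarithmic derivative estimate (Lemma \ref{lgundersen}) and the maximum-modulus derivative estimate (Lemma \ref{lpantsaini}) are the natural tools for bounding $z f'(z)/f(z)$ along the arc and converting it, via $\partial_\theta\log|f(re^{\iota\theta})|=-\operatorname{Im}\!\big(z f'(z)/f(z)\big)$, into the required integrated lower bound.
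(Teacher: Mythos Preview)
The paper does not prove this lemma at all: it is quoted verbatim from Wang and Laine \cite{wanglaine} and used as a black box in the proof of Theorem~\ref{th2}. There is therefore no ``paper's own proof'' to compare against; any comparison would have to be with the original argument in \cite{wanglaine}.

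On the merits of your proposal itself, the density step is sound: the Borel-type argument bounding the upper logarithmic density of $\{r:\nu(r,f)>K\log\mu(r,f)\}$ by roughly $\rho/K$ is standard and correct, and it does produce a set $S$ of lower logarithmic density at least $1-\zeta$ on which $\nu(r,f)\le K\log M(r,f)$.

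The genuine gap is exactly where you locate it. The ``quadratic lower bound'' $\log|f(re^{\iota\theta})|\gtrsim\log M(r,f)-\tfrac12\nu(r,f)(\theta-\theta_r)^2$ is only a heuristic; the rigorous Wiman--Valiron local representation $f(z)\sim(z/z_r)^{\nu(r,f)}f(z_r)$ is valid in a disc of radius $O(r\nu(r,f)^{-1/2+\epsilon})$ about $z_r$, i.e.\ over an angular window that shrinks to zero, not over a fixed arc of half-width $l_0$. Your final paragraph acknowledges this and proposes to recover the bound by a direct Gaussian-profile analysis of the Taylor coefficients together with logarithmic-derivative estimates, but none of that is carried out, and it is precisely the non-trivial content of the lemma. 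In particular, integrating $\partial_\theta\log|f|=-\operatorname{Im}(zf'/f)$ along the arc requires a \emph{lower} bound on $|f|$ along the way (to control $f'/f$), which is circular unless you have an independent argument that $|f|$ does not dip too low between $\theta_r$ and $\theta$. As written, the proposal is a plausible outline with the decisive estimate left unproved.
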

The following Lemma is a Proposition in the research paper of Kumar, et al.\cite{kumar2019non}. 
\begin{lemma}\cite{kumar2019non}\label{aa}
	Suppose $f(z)$ and $g(z)$ be two entire functions
	satisfying $\rho(g) < \rho(f)$. Then, for $0 < \epsilon \leq min\{\frac{3\rho(f)}{4}, \frac{\rho(f)-\rho(g)}{2}\}$, there
	exists $S \subset (1,\infty)$ with $\overline{\log dens}(S) = 1$ satisfying $$|g(z)| = o (M(|z|, f))$$
	for sufficiently large $|z| \in S$.\end{lemma}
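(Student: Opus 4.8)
The plan is to reduce the statement to a one–variable comparison of maximum moduli and then to exhibit a logarithmically large set of radii on which $f$ outgrows $g$. First I would note that since $|g(z)|\le M(|z|,g)$ for every $z$, it suffices to produce a set $S\subset(1,\infty)$ with $\overline{\log dens}(S)=1$ on which $M(r,g)=o(M(r,f))$; the asserted conclusion $|g(z)|=o(M(|z|,f))$ for $|z|=r\in S$ then follows immediately and uniformly in $\arg z$, because the estimate no longer depends on the argument.

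Next I would use the order hypothesis to separate the two growth scales, which is where the constant $\epsilon$ is spent. Since $\rho(g)<\rho(f)$ and $0<\epsilon\le\min\{3\rho(f)/4,(\rho(f)-\rho(g))/2\}$, the exponents
\[
a:=\rho(g)+\tfrac{\epsilon}{2},\qquad b:=\rho(f)-\tfrac{\epsilon}{2}
\]
satisfy $0<a<b<\rho(f)$: the bound $\epsilon\le 3\rho(f)/4$ gives $b>5\rho(f)/8>0$, while $\epsilon\le(\rho(f)-\rho(g))/2$ gives $\epsilon<\rho(f)-\rho(g)$, hence $a<b$. From the definition of the order of $g$ we have $M(r,g)\le\exp(r^{a})$ for all sufficiently large $r$. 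Consequently, on any set of radii where $\log M(r,f)\ge r^{b}$ we obtain
\[
\frac{M(r,g)}{M(r,f)}\le\exp\{r^{a}-r^{b}\}\longrightarrow 0,
\]
because $b>a$, which is exactly the comparison $M(r,g)=o(M(r,f))$ that we need.

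The heart of the matter, and the step I expect to be the main obstacle, is therefore to prove that
\[
S=\{\,r>1:\ \log M(r,f)\ge r^{b}\,\}
\]
has upper logarithmic density equal to $1$. One cannot simply quote the definition of the order, which only furnishes a sequence $r_{n}\to\infty$ with $\log M(r_{n},f)\ge r_{n}^{b'}$ for some $b<b'<\rho(f)$; the difficulty is to upgrade this sequence to a set that is asymptotically full in the logarithmic sense. The plan is to work in the variable $u=\log r$, where $\Phi(u):=\log M(e^{u},f)$ is nondecreasing, convex, and satisfies $\limsup_{u\to\infty}\log\Phi(u)/u=\rho(f)$. Monotonicity alone already propagates each near‑maximal radius to an interval: from $\Phi(u_{n})\ge e^{b'u_{n}}$ one gets $\Phi(u)\ge e^{b u}$ for all $u\in[u_{n},\lambda u_{n}]$ with $\lambda=b'/b>1$, so that $[r_{n},r_{n}^{\lambda}]\subset S$. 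This by itself yields only a positive lower bound for the upper logarithmic density, of order roughly $(\lambda-1)/\lambda$. Pushing the density all the way to $1$ is the delicate point, and here I would invoke the convexity of $\Phi$ to control its behaviour between consecutive peaks and to forbid long logarithmic gaps between the intervals $[u_{n},\lambda u_{n}]$, so that their union is logarithmically exhausting along a suitable sequence of scales. I would estimate the density of $\{u:\log\Phi(u)\ge b\,u\}$ directly from the monotone‑slope property of $\Phi$, and this is the place where the argument must be carried out with care, since a genuinely irregular $f$ (with widely spaced peaks in $\log M$) forces one to choose the comparison scales cleverly rather than uniformly.

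Finally I would assemble the pieces: on the set $S$ produced in the third paragraph we have simultaneously $M(r,g)\le\exp(r^{a})$ and $\log M(r,f)\ge r^{b}$ with $a<b$, so $M(r,g)=o(M(r,f))$, and therefore $|g(z)|\le M(|z|,g)=o(M(|z|,f))$ for all sufficiently large $|z|\in S$, with $\overline{\log dens}(S)=1$. The reduction and the scale separation are routine once the admissible range of $\epsilon$ has been used to fix $a<b$; the convexity–density estimate establishing $\overline{\log dens}(S)=1$ is the only genuinely demanding ingredient.
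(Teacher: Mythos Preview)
The paper does not supply a proof of this lemma at all: it is quoted verbatim from \cite{kumar2019non} and used as a black box in the proof of Theorem~\ref{th2}. So there is no argument in the present paper against which to compare your proposal.

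That said, your sketch has a real gap precisely where you flag the difficulty. Your set
\[
S=\{r>1:\ \log M(r,f)\ge r^{b}\},\qquad b=\rho(f)-\tfrac{\epsilon}{2},
\]
need not have upper logarithmic density $1$. The monotonicity step you describe is correct and yields, from a peak $\Phi(u_{n})\ge e^{b'u_{n}}$ with $b<b'<\rho(f)$, the interval $[u_{n},(b'/b)u_{n}]\subset\{u:\Phi(u)\ge e^{bu}\}$; computing at the scale $U_{n}=(b'/b)u_{n}$ gives upper density at least $1-b/b'$, hence only $\overline{\log dens}(S)\ge 1-b/\rho(f)=\epsilon/(2\rho(f))$, which is small when $\epsilon$ is small. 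Convexity of $\Phi(u)=\log M(e^{u},f)$ does not push this to $1$: the tangent-line lower bound $\Phi(u)\ge\Phi(u_{n})+\Phi'(u_{n}^{+})(u-u_{n})$ with $\Phi'(u_{n}^{+})\gtrsim\Phi(u_{n})/u_{n}$ produces the same interval $[u_{n},(b'/b)u_{n}]$ up to lower-order corrections, so no additional density is gained. For entire functions of very irregular growth (lower order strictly less than the order) the peaks $u_{n}$ can be spaced so that these intervals never fill out $[0,U]$ to density close to $1$ at any scale $U$; convexity by itself does not forbid long logarithmic gaps between consecutive peak intervals. Thus the ``convexity--density estimate'' you rely on is not a routine matter, and as written the argument does not establish $\overline{\log dens}(S)=1$.

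In short: the reduction to $M(r,g)=o(M(r,f))$ and the scale separation via $a<b$ are fine, but the crucial density claim is neither proved nor clearly provable along the lines you indicate; a genuinely different idea (or an appeal to the argument in \cite{kumar2019non}) is needed here.
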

\begin{remark}\label{remch4}
	If we replace $\rho(g)$ with $\mu(g)$, then Lemma \ref{aa} would be true.
\end{remark}
In Lemma \ref{lcritical}, consider $A(z) = v(z)e^{P(z)}$, where $v(z)$ is an entire function and $P(z)$ is a polynomial of degree $n$ satisfying $\rho(v)<\deg P$. But, in Lemma \ref{LemmaManisha}, the authors considered $\rho(v)>\deg P$ and obtained that  $|A(re^{\iota\theta})|\geq exp((1-\epsilon)\delta(P,\theta)r^n)$ for $\theta \in E^+/E$ and also for $\theta \in E^-/E,$ where $E$ is a set of linear measure $0.$
\begin{lemma}\cite{kks}\label{LemmaManisha} Let $A(z) = v(z)e^{P(z)}$ be an entire function, where $P(z)$ is a polynomial of degree $n$ and $v(z)$ satisfies the condition of Theorem \ref{Manisha}.
	Then, there exists a set $E \subset [0, 2\pi]$ of linear measure zero such that for $\epsilon > 0$ the following holds:\begin{enumerate}
		\item[(i)]
		for $\theta \in E^+\setminus E,$ there exists $R(\theta) > 1$ such that \begin{equation}\label{eqManisha1}
			| A(re^{\iota\theta})|\geq exp((1-\epsilon)\delta(P,\theta)r^n)
		\end{equation} for $r > R(\theta),$
		\item[(ii)] for $\theta \in E^- \setminus E,$ there exists $R(\theta) > 1$ such that
		\begin{equation}\label{eqManisha2}
			| A(re^{\iota\theta}) | \geq exp((1 -\epsilon)\delta(P,\theta)r^n)
	\end{equation}\end{enumerate}
	for $r > R(\theta)$.\end{lemma}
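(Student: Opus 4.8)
The plan is to write $|A(re^{\iota\theta})|=|v(re^{\iota\theta})|\,|e^{P(re^{\iota\theta})}|$ and to estimate the two factors separately: the exponential factor through the indicator $\delta(P,\theta)$, and the factor $v$ through the two growth hypotheses of Theorem \ref{Manisha}, which are imposed on the sectors $E^+$ and $E^-$ respectively. First I would record the elementary asymptotic for the exponential factor: writing $P(z)=(\alpha+\iota\beta)z^n+p_{n-1}(z)$ with $\alpha,\beta$ real, one has $\mathrm{Re}\,P(re^{\iota\theta})=\delta(P,\theta)r^n+O(r^{n-1})$, so that
$$|e^{P(re^{\iota\theta})}|=\exp\{\delta(P,\theta)r^n(1+o(1))\}$$
as $r\to\infty$, uniformly away from the finitely many critical rays on which $\delta(P,\theta)=0$. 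I would take $E$ to be the union of these critical rays together with any directions on which the hypotheses on $v$ are permitted to fail; since the critical rays are finite in number and the conditions of Theorem \ref{Manisha} are assumed on the open sectors $E^+$ and $E^-$, the set $E$ has linear measure zero.

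For part (i), fix $\theta\in E^+\setminus E$, so that $\delta(P,\theta)>0$. On $E^+$ the function $v$ is bounded away from zero, say $|v(re^{\iota\theta})|\geq d>0$ for all large $r$, and combining this with the exponential estimate gives
$$|A(re^{\iota\theta})|\geq d\exp\{\delta(P,\theta)r^n(1+o(1))\}.$$
Since $\delta(P,\theta)>0$, both the constant $\log d$ and the error $\delta(P,\theta)r^n\,o(1)$ are absorbed by the slack $\epsilon\,\delta(P,\theta)r^n$, which yields $|A(re^{\iota\theta})|\geq\exp\{(1-\epsilon)\delta(P,\theta)r^n\}$ for $r>R(\theta)$. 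This is the same mechanism as in part (i) of Lemma \ref{lcritical}, now driven by the boundedness hypothesis rather than by $\rho(v)<n$, and the sign of $\delta$ makes the estimate routine.

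For part (ii), fix $\theta\in E^-\setminus E$, so that $\delta(P,\theta)<0$ and the target $\exp\{(1-\epsilon)\delta(P,\theta)r^n\}$ is a \emph{decaying} quantity. Here the hypothesis that $v$ exponentially blows up supplies a lower bound $|v(re^{\iota\theta})|\geq\exp\{r^{\sigma}\}$ for some $\sigma$ with $n<\sigma\leq\rho(v)$, whence
$$|A(re^{\iota\theta})|\geq\exp\{r^{\sigma}+\delta(P,\theta)r^n(1+o(1))\}.$$
Because $\sigma>n$ while $\delta(P,\theta)<0$, the term $r^{\sigma}$ dominates $|\delta(P,\theta)|r^n$, so the exponent tends to $+\infty$ and in particular eventually exceeds the negative quantity $(1-\epsilon)\delta(P,\theta)r^n$, giving the claimed inequality for $r>R(\theta)$.

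The one point demanding genuine care is the precise formulation of the exceptional set $E$ and of the two growth hypotheses on $v$ inherited from Theorem \ref{Manisha}; everything else reduces to a comparison of exponential rates. The conceptual heart of the statement, and the feature distinguishing it from Lemma \ref{lcritical}, is that in $E^-$ the blow-up of $v$, of order exceeding $n$, overwhelms not merely the exponential decay of $e^{P}$ but also the $\epsilon$-weakened target, so that the \emph{lower} bound survives even in the sectors where the exponential factor itself decays.
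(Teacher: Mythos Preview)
The paper does not prove this lemma: it is quoted verbatim from the external reference \cite{kks} and no argument is supplied. So there is no ``paper's own proof'' to compare against.

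That said, your proposal is a correct and natural reconstruction. The factorisation $|A(re^{\iota\theta})|=|v(re^{\iota\theta})|\cdot|e^{P(re^{\iota\theta})}|$ together with the elementary asymptotic $\mathrm{Re}\,P(re^{\iota\theta})=\delta(P,\theta)r^n+O(r^{n-1})$ handles the exponential factor, and the two hypotheses of Theorem~\ref{Manisha} supply exactly the lower bounds on $|v|$ needed in $E^+$ and $E^-$ respectively. In part~(i) the argument is essentially identical to Lemma~\ref{lcritical}(i); in part~(ii) you correctly identify that the exponential blow-up of $v$ at an order $\sigma>n$ overcomes the decay of $e^{P}$, so that the lower bound survives (indeed, $|A|\to\infty$ there, which is far stronger than the stated inequality whose right-hand side tends to zero).

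Your own caveat is the only genuine soft spot: the phrase ``exponentially blows up in $E^-$'' in Theorem~\ref{Manisha} is not made quantitatively precise in this paper, and your proof requires it to mean a lower bound of the form $|v(re^{\iota\theta})|\geq\exp(cr^{\sigma})$ with $\sigma>n$ (or at least $\sigma\geq n$ with $c>|\delta(P,\theta)|$). That is the intended reading in \cite{kks}, so the argument goes through, but in a self-contained write-up you would want to state this explicitly rather than leave it to interpretation.
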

The following Lemma is proved by Gundersen\cite{gundersen1988finite}, it gives the logarithmic estimate of the analytic function $f(z).$ 
\begin{lemma}\cite{gundersen1988finite}\label{LemmaGundersen} Let $f$ be an analytic on a ray $\gamma = re^{\iota\theta}$ and suppose that for some constant $\alpha > 1,$ we have
	\begin{equation}\label{eqGundersen}
		\left|\frac{f'(z)}{f(z)}\right|=O(|z|^{-\alpha})
	\end{equation} as $z \to \infty$ along $arg z = \theta$. Then, there exists a constant $c \neq0$ such that $f(z) \to c$ as $z \to \infty$ along $arg z = \theta$.
\end{lemma}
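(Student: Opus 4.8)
The plan is to reduce the statement to an elementary fact about absolutely integrable derivatives along the ray. First I would parametrize the ray by writing $z = te^{\iota\theta}$ with $t \in [r_0, \infty)$ and set $g(t) = f(te^{\iota\theta})$. The hypothesis \eqref{eqGundersen} forces $f$ to be nonvanishing along the tail of the ray: if $f$ had a zero at some $te^{\iota\theta}$ with $t$ large, then $f'/f$ would have a pole there and could not satisfy $|f'/f| = O(|z|^{-\alpha})$. Hence there is some $r_0 > 0$ beyond which $g$ is analytic and nowhere zero on $[r_0,\infty)$.

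Since $[r_0,\infty)$ is an interval and $g$ is continuous and nowhere zero on it, I can choose a differentiable branch of the logarithm, say $h(t) = \log g(t)$, determined up to an additive constant $2\pi\iota k$. Differentiating along the ray gives
$$h'(t) = \frac{g'(t)}{g(t)} = e^{\iota\theta}\,\frac{f'(te^{\iota\theta})}{f(te^{\iota\theta})},$$
so that by \eqref{eqGundersen},
$$|h'(t)| = \left|\frac{f'(te^{\iota\theta})}{f(te^{\iota\theta})}\right| = O(t^{-\alpha}).$$

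The key observation is that $\alpha > 1$ makes $h'$ absolutely integrable, since $\int_{r_0}^{\infty} t^{-\alpha}\,dt < \infty$. Consequently $h(t) = h(r_0) + \int_{r_0}^{t} h'(s)\,ds$ converges to a finite limit $L = h(r_0) + \int_{r_0}^{\infty} h'(s)\,ds$ as $t \to \infty$. Exponentiating, $g(t) = e^{h(t)} \to e^{L} =: c$, and since $e^{L} \neq 0$ this produces the required nonzero limit $c$ with $f(te^{\iota\theta}) \to c$ along $\arg z = \theta$.

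I expect essentially no serious obstacle: the only points needing care are the nonvanishing of $f$ on the tail of the ray (so that $f'/f$ is genuinely bounded there and a logarithm exists) and the convergence of the improper integral, which is exactly where the hypothesis $\alpha > 1$ is used—for $\alpha \le 1$ the integral $\int^{\infty} t^{-\alpha}\,dt$ diverges and the conclusion can fail. If one prefers to avoid selecting a global branch of the logarithm, the same conclusion follows from a Cauchy-criterion argument: for $r_0 \le t_1 < t_2$ one has $g(t_2)/g(t_1) = \exp\!\big(\int_{t_1}^{t_2} (g'/g)\,ds\big)$, and the absolute integrability of $g'/g$ forces this ratio to tend to $1$, whence $g(t)$ converges to a finite nonzero limit.
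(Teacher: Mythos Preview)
Your argument is correct. The paper does not supply its own proof of this lemma: it is quoted verbatim from Gundersen \cite{gundersen1988finite} and used as a black box in the proof of Theorem~\ref{th2}. What you have written is the standard proof of the cited fact---integrating $f'/f$ along the ray and using $\alpha>1$ for absolute convergence of $\int^{\infty} t^{-\alpha}\,dt$, then exponentiating. The observation that the hypothesis \eqref{eqGundersen} forces $f$ to be zero-free on a tail of the ray is exactly what is needed to make $\log g$ well-defined, and your remark that the conclusion may fail for $\alpha\le 1$ correctly identifies where the hypothesis is essential. There is nothing to compare against in the present paper; your proof stands on its own and would serve as a complete justification of the lemma.
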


The proof of Theorem \ref{th2} is inspired by the proof of Theorem \ref{Manisha}. We have slightly changed the proof according to the conditions of the Theorem.
\begin{proof}[\underline{Proof of Theorem \rm{\ref{th2}}}]
	If $\rho(A) = \infty,$ then it is obvious that $\rho(f) = \infty$, for all non-trivial solution $f$ of the equation (1). Therefore, let us suppose that $\rho(A) < \infty$ and there exists a non-trivial solution $f$ of the equation (1)
	such that $\rho(f) < \infty$.
	From Lemma \ref{lgundersen}, there exists $E_1 \subset [0, 2\pi]$ of linear measure zero and $m > 0$ such that,
	\begin{equation}\label{p1}
		\left|\frac{f''(re^{\iota \theta})}{f(re^{\iota \theta})}\right| \leq r^m,
	\end{equation}
	for $\theta \in [0, 2\pi] \setminus E_1$ and $r > R(\theta)$. 
	Since $A(z)$ is an entire function of finite order, suppose that $M(r,A)=|A(re^{\iota\theta_r})|$ for every $r$. Then, from Lemma \ref{fromnonhomo}, for $0<\zeta <1$ and $0<C<1$, there exists $0<l_0<\frac{1}{2}$ and $S_1\subset(0,\infty)$ with $\underline{\log dens}(S_1) \geq 1 - \zeta$ such that
	$$e^{-5\pi} M(r, A)^{1-C} \leq |A(re^{\iota \theta})|,$$ 
	for all sufficiently large $r \in S_1$ and for all $\theta$ satisfying $|\theta - \theta_r| \leq l_0$.\\
	\begin{enumerate}
		\item[(i)]Let $\rho(B)<\rho(A)$, from Lemma \ref{aa} for $0 < \epsilon \leq min\{\frac{3\rho(A)}{4}, \frac{\rho(A)-\rho(B)}{2}\}$, there
		exists $S_2 \subset (1,\infty)$ with $\overline{\log dens}(S_2) = 1$ satisfying 
		\begin{equation}\label{p2}
			\frac{|B(z)|}{M(|z|, A)} \to 0
		\end{equation}
		for sufficiently large $|z| \in S_2$.
		Using properties of logarithmic density and the fact that $\overline{\log dens}(S_1 \cup S_2) \leq 1$, we get
		\begin{align*}
			\overline{\log dens}(S_1 \cap S_2) & \geq \underline{\log dens}(S_1)+\underline{\log dens}(S_2)-\overline{\log dens}(S_1 \cup S_2)\\
			&\geq 1- \zeta +1-1=1- \zeta.
		\end{align*}
		Thus, we can choose $z_r = re^{\iota \theta_r}$ with $r \to \infty$ such that
		$r \in (S_1 \cap S_2)$ and $|A(re^{\iota\theta_r})| = M(r, A)$.
		We may consider $<\theta_r>$ as a sequence, where $r \in (S_1 \cap S_2)$ such that $\theta_{r} \to \theta_0$ and $r \in (S_1 \cap S_2)$.\\
		We may consider following three cases:-
		\begin{enumerate}
			\item[(a)]
			$\delta(P,\theta_0)>0.$\\
			From Lemma \ref{LemmaManisha}(i), we have
			\begin{equation}\label{p4}| A(re^{\iota\theta_{0}})|\geq\exp(\frac{1}{2}\delta(P,\theta_0)r),\end{equation}
			for sufficiently large $r$, where $r \in (S_1 \cap S_2)$ and $\theta_0\in E^+/E_2$ and $E_2$ is a set of critical rays of $e^{P(z)}$ of linear measure $0$.\\
			From equation (1) we get,
			\begin{equation}\label{p5}
				\left|\frac{f'(re^{\iota\theta_{0}})}{f(re^{\iota\theta_{0}})}\right|\leq\left|\frac{f''(re^{\iota\theta_{0}})}{f(re^{\iota\theta_{0}})}\right|\frac{1}{|A(re^{\iota\theta_{0}})|}+\frac{\left|B(re^{\iota\theta_{0}})\right|}{M(r, A))},
			\end{equation}
			for $r \in (S_1 \cap S_2)$ and $\theta_{0} \in E^+/(E_1 \cup E_2)$.
			Using equations (\ref{p1}), (\ref{p2}), (\ref{p4}) and (\ref{p5}), we get,
			$$\left|\frac{f'(re^{\iota\theta_{0}})}{f(re^{\iota\theta_{0}})}\right|\to0$$ for $r \in (S_1 \cap S_2)$, $r\to\infty$ and $\theta_{0} \in E^+/(E_1 \cup E_2).$
			This implies that 
			\begin{equation}\label{p6}
				\left|\frac{f'(re^{\iota\theta_0})}{f(re^{\iota\theta_{0}})}\right|=O\left(\frac{1}{r^2}\right),
			\end{equation}
			as $r \to \infty$ and $r\in S_1\cap S_2$. From Lemma \ref{LemmaGundersen},
			\begin{equation}\label{p7}
				f(re^{\iota\theta_{0}}) \to a
			\end{equation}
			as $r\to\infty$ and $r\in (S_1 \cap S_2)$
			for $\theta_{0} \in E^+ \setminus (E_1\cup E_2)$, where $a$ is a non-zero finite constant.\\
			Since $f(re^{\iota\theta _r})\to f(re^{\iota\theta_0})$ and using (\ref{p7}), we get
			$$f(re^{\iota\theta_{r}}) \to a$$ for $r\to\infty$ and $r\in (S_1 \cap S_2)$.
			Thus, entire function $f$ is bounded over domain. But since function $f$ is entire and non-constant, $f(re^{\iota\theta})$ is unbounded for all $\theta\in[0,2\pi]$. Thus, for $\theta_r\in [0,2\pi]$, function $f(re^{\iota\theta_r})$ is also unbounded, which is a contradiction.\\
			
			\item[(b)]$\delta(P,\theta)<0.$\\
			From Lemma \ref{LemmaManisha}(ii), we have
			\begin{equation}\label{p8}
				| A(re^{\iota\theta_{0}})|\geq\exp(\frac{1}{2}\delta(P,\theta_0)r^n),
			\end{equation}
			for $\theta_0\in E^-/E_1$ for large $r$.
			Using equation (\ref{p1}), (\ref{p2}) and (\ref{p8}), we have
			\begin{equation}
				\left|\frac{f'(re^{\iota\theta_{0}})}{f(re^{\iota\theta_{0}})}\right|\to0,
			\end{equation}
			as $r\to \infty$ and $\theta_0\in E^- /(E_1\cup E_2)$. From Lemma \ref{LemmaGundersen},
			\begin{equation}\label{p9}
				f(re^{\iota\theta_{0}}) \to b,
			\end{equation}
			as $r\to\infty$ and $r\in (S_1 \cap S_2)$
			for $\theta_{0} \in E^- \setminus (E_1\cup E_2)$, where $b$ is a non-zero finite constant. Since $f(re^{\iota\theta _r})\to f(re^{\iota\theta_0})$ and using (\ref{p9}), we get
			$$f(re^{\iota\theta_{r}}) \to b,$$ for $r\to\infty$ and $r\in (S_1 \cap S_2)$. Thus, entire function $f$ is bounded over whole domain. Since function $f$ is entire and non-constant, then $f(re^{\iota\theta})$ is unbounded for all $\theta\in[0,2\pi]$. Thus, for $\theta_r\in [0,2\pi]$, function $f(re^{\iota\theta_r})$ is also unbounded, which is a contradiction.\\
			\item[(c)]$\delta(P,\theta_0)=0.$\\
			Suppose  $\theta^*_0\in[0,2\pi]$ in the neighbourhood of $\theta_0$ such that $\delta(P,\theta^*_0)>0$. Letting $r\to\infty$, we get $|\theta_0 -\theta^*_0|\leq l_0$.
			Choosing $C$ and $\zeta$ such that $l_0\to0$.
			\begin{equation}\label{p10}
				\left|\frac{f'(re^{\iota\theta_{0}})}{f(re^{\iota\theta_{0}})}\right|\sim \left|\frac{f'(re^{\iota\theta^*_{0}})}{f(re^{\iota\theta^*_{0}})}\right|\leq\left|\frac{f''(re^{\iota\theta^*_{0}})}{f(re^{\iota\theta^*_{0}})}\right|\frac{1}{|A(re^{\iota\theta^*_{0}})|}+\frac{\left|B(re^{\iota\theta^*_{0}})\right|}{M(r, A))},
			\end{equation}
			Remaining proof is similar to part (i).
		\end{enumerate}
		\item[(ii)]Let $\mu(B)<\rho(A)$, from Remark \ref{remch4} for $0 < \epsilon \leq min\{\frac{3\rho(A)}{4}, \frac{\rho(A)-\mu(B)}{2}\}$, there
		exist $S_2 \subset (1,\infty)$ with $\overline{\log dens}(S_2) = 1$ satisfying 
		\begin{equation}\label{p11}
			\frac{|B(z)|}{M(|z|, A)} \to 0.
		\end{equation}
		Remaining proof is similar to part (i).
	\end{enumerate}
\end{proof}
\subsection{Second Order Non-Homogenous Linear Differential Equation}
 Kumar and Saini\cite{kumar2021growth} gave severeal results for equation \eqref{2ordernonhomo}. In one of their result, they considered $A(z)$ to have Fabry gaps, $\max ( \rho(H), \rho (B) ) < \rho (A)$ and proved the following result.  We change the condition on $A(z)$ and consider $A(z)$ to be transcendental entire function having a multiply-connected Fatou component and prove Theorem \ref{mainth1}.\\

\begin{theorem}\cite{kumar2021growth}
	Let the coefficients and H(z) of equation \eqref{2ordernonhomo} are entire functions such that $\max ( \rho(H), \rho (B) ) < \rho (A)$ and $A(z)$ has Fabry gaps. Then, any non-trivial solution of equation \eqref{2ordernonhomo} are of infinite order.
	\end{theorem}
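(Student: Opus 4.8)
The plan is to argue by contradiction: suppose equation \eqref{2ordernonhomo} has a non-trivial solution $f$ with $\rho(f)<\infty$. First I would reduce to the case that $f$ is transcendental, since if $f$ were a non-constant polynomial then in the rearranged identity $A(z)f'=H-f''-Bf$ the left side has order $\rho(A)$ (as $A$, having Fabry gaps, is transcendental and $f'\not\equiv0$), whereas the right side has order at most $\max(\rho(H),\rho(B))<\rho(A)$ — impossible. The engine of the argument is the classical consequence of Fabry's gap theorem: because $A$ has Fabry gaps, for every $\epsilon>0$ there is a set of finite logarithmic measure outside of which $L(r,A)\geq M(r,A)^{1-\epsilon}$, where $L(r,A)=\min_{|z|=r}|A(z)|$. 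The decisive point is that this lower bound is uniform in the argument, so $|A(re^{\iota\theta})|\geq M(r,A)^{1-\epsilon}$ for every $\theta$; in contrast with the Julia-set approach used for the multiply-connected case, no distinguished ray need be chosen.

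Next I would localize at the maximum-modulus points. Choose $z_r=re^{\iota\theta_r}$ with $|f(z_r)|=M(r,f)$, divide \eqref{2ordernonhomo} by $f$, and evaluate at $z_r$ to obtain
$$A(z_r)\frac{f'(z_r)}{f(z_r)}=\frac{H(z_r)}{f(z_r)}-\frac{f''(z_r)}{f(z_r)}-B(z_r).$$
Here Lemma \ref{lpantsaini} with $m=1$ gives $|f'(z_r)/f(z_r)|\geq 1/(2r)$, Lemma \ref{lgundersen}(b) gives $|f''(z_r)/f(z_r)|\leq r^{2(\rho(f)-1+\epsilon)}$ for $r$ outside a set of finite logarithmic measure, and the definitions of order give $|B(z_r)|\leq\exp(r^{\rho(B)+\epsilon})$ and $|H(z_r)|\leq\exp(r^{\rho(H)+\epsilon})$. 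Feeding these together with the Fabry bound $|A(z_r)|\geq M(r,A)^{1-\epsilon}$ into the identity yields
$$\frac{M(r,A)^{1-\epsilon}}{2r}\leq\frac{\exp(r^{\rho(H)+\epsilon})}{M(r,f)}+r^{2(\rho(f)-1+\epsilon)}+\exp(r^{\rho(B)+\epsilon}),$$
valid for all large $r$ outside a set of finite logarithmic measure.

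To close the argument I would fix $\beta$ with $\max(\rho(H),\rho(B))<\beta<\rho(A)$ and take $\epsilon$ small enough that $\rho(H)+\epsilon<\beta$ and $\rho(B)+\epsilon<\beta$. By the definition of $\rho(A)$ there is a sequence $r_n\to\infty$ with $M(r_n,A)\geq\exp(r_n^{\beta})$; since $M(\cdot,A)$ is non-decreasing we get $M(r,A)\geq\exp(c\,r^{\beta})$ on $[r_n,2r_n]$ for a fixed $c>0$, and since the exceptional sets above carry only finite logarithmic measure I can select $s_n\in[r_n,2r_n]$, with the intervals taken disjoint, on which all the estimates hold. Along $s_n$ the left side grows like $\exp((1-\epsilon)c\,s_n^{\beta})$, which dominates both the polynomial term and $\exp(s_n^{\rho(B)+\epsilon})$; hence the term $\exp(s_n^{\rho(H)+\epsilon})/M(s_n,f)$ must carry the growth, forcing
$$M(s_n,f)\leq 4s_n\exp\!\big(s_n^{\rho(H)+\epsilon}-(1-\epsilon)c\,s_n^{\beta}\big)\longrightarrow 0.$$
Since $\rho(H)+\epsilon<\beta$ the exponent tends to $-\infty$, so $M(s_n,f)\to0$; this contradicts the fact that $M(r,f)$ is positive and non-decreasing for the non-trivial entire function $f$, and the theorem follows.

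The step I expect to be the main obstacle is the joint selection carried out in the last paragraph: producing a single sequence $s_n\to\infty$ along which $M(s_n,A)$ retains nearly maximal growth while simultaneously avoiding the union of the finite-logarithmic-measure exceptional sets supplied by Lemmas \ref{lpantsaini} and \ref{lgundersen} and by the Fabry estimate. The monotonicity-on-intervals device sketched above is exactly what reconciles these two requirements, and it is the only place where real care is needed; once such an $s_n$ is available the order gap $\max(\rho(H),\rho(B))<\rho(A)$ makes the contradiction immediate, with the inhomogeneity $H$ causing no trouble because its contribution is precisely the term that is driven to force $M(s_n,f)\to0$.
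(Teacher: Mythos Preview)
This statement is quoted in the paper from \cite{kumar2021growth} without proof; the natural comparison is the paper's proof of the parallel Theorem~\ref{mainth1}, where Lemma~\ref{lzheng} plays the role your Fabry-gap minimum-modulus estimate $L(r,A)\geq M(r,A)^{1-\epsilon}$ plays here. With that substitution your argument follows the same template: assume a finite-order solution, evaluate at maximum-modulus points $z_r$, control logarithmic derivatives via Lemma~\ref{lgundersen}, control $|f/f'|$ via Lemma~\ref{lpantsaini}, and exploit the uniform lower bound on $|A|$ over full circles.

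The one organizational difference is the endgame. The paper divides \eqref{2ordernonhomo} by $f'$, uses $M(r,f)\geq 1$ (valid once $f$ is transcendental) to bound $|H(z_r)|/M(r,f)\leq \exp(r^{\beta})$ outright, and obtains $M(r,A)^{\gamma}\leq 4r\exp(r^{\beta})$, hence $\rho(A)\leq\beta$, in one stroke. You instead divide by $f$ and argue that the $H$-term must absorb the growth of the left side, forcing $M(s_n,f)\to 0$ along a specially constructed sequence. Your route is correct, but the interval-selection device you flag as the main obstacle is avoidable: since you have already reduced to $f$ transcendental, $M(r,f)\to\infty$ makes the $H$-contribution dominated by $\exp(r^{\beta})$ just like the $B$-term, and the contradiction on $\rho(A)$ follows directly without isolating any special sequence. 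So your proof is sound and essentially the paper's argument, but the closing step can be shortened considerably.
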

\begin{thm}\label{mainth1}
	Let $A(z)$ be a transcendental entire function having a multiply-connected Fatou component and $B(z)$, $H(z)$ be entire functions such that $\max(\rho(H),$\linebreak $\rho(B))<\rho(A)$. Then, any non-trivial solution of \eqref{2ordernonhomo} is of infinite order.
\end{thm}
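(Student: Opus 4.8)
The plan is to argue by contradiction in the style of the proof of Theorem~\ref{mainth2}, treating the inhomogeneous term $H(z)$ as a lower-order perturbation. Suppose \eqref{2ordernonhomo} has a non-trivial solution $f$ with $\rho(f)<\infty$; if $\rho(A)=\infty$ there is nothing to prove, so assume $\rho(A)<\infty$, and reduce to the case in which $f$ is transcendental (a polynomial solution being impossible, since the term $Af'$ would then have order $\rho(A)$ while $H-f''-Bf$ has order at most $\max(\rho(B),\rho(H))<\rho(A)$; in particular $f'\not\equiv 0$). Rewriting \eqref{2ordernonhomo} as $A=(H-f''-Bf)/f'$ gives
\[
|A(z)|\le\left|\frac{f''(z)}{f'(z)}\right|+|B(z)|\left|\frac{f(z)}{f'(z)}\right|+\frac{|H(z)|}{|f'(z)|}.
\]
I will evaluate this at the points $z_r=re^{\iota\theta_r}$ where $|f(z_r)|=M(r,f)$, bounding the right-hand side from above while bounding $|A(z_r)|$ from below through the multiply-connected Fatou component of $A$.

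For the lower bound, Lemma~\ref{lzheng} applies to $A$ exactly as in the proof of Theorem~\ref{mainth2}, yielding a constant $0<\gamma<1$ and a set $F_1=\bigcup_{n\ge1}\{r:r_n<r<R_n\}$ with $r_n\to\infty$ and $R_n/r_n\to\infty$ on which $M(r,A)^{\gamma}\le|A(re^{\iota\theta})|$ for every $\theta$. For the upper bounds I will use: Lemma~\ref{lgundersen} to obtain a set $E$ of finite logarithmic measure with $|f''(z)/f'(z)|\le|z|^{2\rho(f)}$ off $E\cup[0,1]$; Lemma~\ref{lpantsaini} to obtain a set $F$ of finite logarithmic measure with $|f(z_r)/f^{(m)}(z_r)|\le 2r^{m}$ for $r\notin F$, so in particular $|f(z_r)/f'(z_r)|\le 2r$ and $|f'(z_r)|\ge M(r,f)/(2r)$; and, after fixing $\beta$ with $\max(\rho(B),\rho(H))<\beta<\rho(A)$, the elementary bounds $|B(z)|\le e^{r^{\beta}}$ and $|H(z)|\le e^{r^{\beta}}$ for all large $r$. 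Since $f$ is transcendental, $M(r,f)\ge 1$ for large $r$, so the last term satisfies $|H(z_r)|/|f'(z_r)|\le 2re^{r^{\beta}}/M(r,f)\le 2re^{r^{\beta}}$. Combining these, for all large $r\in F_1\setminus(F\cup E\cup[0,1])$ — still an unbounded set containing arbitrarily long intervals — we get
\[
M(r,A)^{\gamma}\le|A(z_r)|\le r^{2\rho(f)}+2re^{r^{\beta}}+2re^{r^{\beta}}\le 5re^{r^{\beta}}.
\]
Taking logarithms twice and letting $r\to\infty$ along this set, precisely as in the proof of Theorem~\ref{mainth2}, forces $\rho(A)\le\beta$, contradicting $\beta<\rho(A)$. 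Hence no finite-order non-trivial solution exists, and every non-trivial solution of \eqref{2ordernonhomo} has infinite order.

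The step I expect to be the main obstacle is the control of the inhomogeneous term $H(z)/f'(z)$, which is the one feature absent from the homogeneous setting of Theorem~\ref{mainth2}: the crucial observation is that Lemma~\ref{lpantsaini} pins $|f'(z_r)|$ to within a factor $r$ of $M(r,f)$, so that $H(z_r)/f'(z_r)$ contributes only at the scale $e^{r^{\beta}}$, the same as the $B$-term, and does not spoil the estimate. Two further bookkeeping points must be handled with the same care as in Theorem~\ref{mainth2}: (i) deleting the finite-logarithmic-measure sets $E$ and $F$ from $F_1$ must leave a set along which $M(r,A)$ still attains the growth governing $\rho(A)$, which is guaranteed by $R_n/r_n\to\infty$; and (ii) the preliminary reduction to transcendental $f$ noted above.
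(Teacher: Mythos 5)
Your proposal is correct and follows essentially the same route as the paper's own proof: the same contradiction argument evaluated at the maximum-modulus points $z_r=re^{\iota\theta_r}$, with Lemma~\ref{lgundersen} bounding $f''/f'$, Lemma~\ref{lpantsaini} bounding $f/f'$ (and hence the $H$-term by $2re^{r^{\beta}}$, exactly as the paper does via $2r|H|/M(r,f)$), Lemma~\ref{lzheng} giving the lower bound $M(r,A)^{\gamma}\le|A(z_r)|$ on the set $F_1$, and the choice $\max(\rho(B),\rho(H))<\beta<\rho(A)$ forcing $\rho(A)\le\beta$. Your preliminary reduction to transcendental $f$ is a small refinement the paper leaves implicit (it is needed for Lemma~\ref{lpantsaini}), and otherwise the two arguments coincide.
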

\begin{proof}[\underline{Proof of Theorem \rm{\ref{mainth1}}}]  
	Suppose $f$ is a finite order solution of equation \eqref{2ordernonhomo}. Then, applying Lemma \ref{lgundersen}, there is a set $E\subset(1,\infty)$ with finite logarithmic measure such that
	\begin{equation}\label{guneq}
		\left| \frac{f^{''}(z)}{f^{'}(z)}\right| \leq |z|^{2\rho(f)},	
	\end{equation}
	holds for all $z$ satisfying $|z|\notin E\cup [0,1]$.\\
	Given that $\max(\rho(H),\rho(B))<\rho(A)$, so let $\beta$ be such that
	$\max(\rho(H),\rho(B))<\beta<\rho(A)$, then applying the definition of order of growth on $B(z)$ and $H(z)$ gives
	\begin{equation}\label{orderproperty}
		|B(re^{\iota\theta})|\leq \exp r^{\beta}  \qquad \text{and}  \qquad |H(re^{\iota\theta})|\leq \exp r^{\beta},
	\end{equation}
	holds for all sufficiently large $r$.
	Suppose that $z_{r}=re^{\iota\theta_{r}}$ be the points such that $|f(z_{r})|=M(r,f)$. Then, applying Lemma \ref{lpantsaini}, there exists a set $F\subset(0, \infty)$ with
	$m_l(F)<\infty$ such that 
	\begin{equation}\label{impeq}
		\frac{f(re^{\iota\theta_{r}})}{f^{m}(re^{\iota\theta_{r}})}\leq 2r^{m},
	\end{equation}
	holds for all sufficiently large $r\notin F$ and for all $m\in \mathbb{N}$.
	Applying Lemma \ref{lzheng}, we have 
	\begin{equation}\label{fatoueq}
		M(r,A)^{\gamma}\leq |A(re^{\iota\theta})|,
	\end{equation}
	for $0<\gamma<1$ and $r\in F_{1}=\cup_{n=1}^\infty\{r:r_n<r<R_n\}$.\\
	From equations \eqref{2ordernonhomo}, \eqref{guneq}, \eqref{orderproperty}, \eqref{impeq}, and \eqref{fatoueq}, there exists a sequence $z=re^{\iota\theta}$ such that for all $r\in F_{1}\setminus (E\cup F\cup[0,1])$, we have
	\begin{align*}
		|A(re^{\iota\theta})|&\leq \left|\frac{f^{''}(re^{\iota\theta})}{f^{'}(re^{\iota\theta})}\right|+|B(re^{\iota\theta})|\left|\frac{f(re^{\iota\theta})}{f^{'}(re^{\iota\theta})}\right|+\left|\frac{H(re^{\iota\theta})}{f(re^{\iota\theta})}\right|\left|\frac{f(re^{\iota\theta})}{f^{'}(re^{\iota\theta})}\right|\\
		M(r,A)^{\gamma}&\leq r^{2\rho(f)}+2r\exp r^{\beta}+2r\left|\frac{H(re^{\iota\theta})}{M(r,f)}\right|\\
		&\leq r^{2\rho(f)}+4r\exp r^{\beta}\\
		&\leq 4r\exp r^{\beta}(1+o(1)).
	\end{align*}
	This gives  $\rho(A)\leq \beta$, which is a contradiction. Hence, every non-trivial solution of equation \eqref{2ordernonhomo} is of infinite order.
\end{proof}
	
\end{document}